\documentclass[20pt, oneside]{article}   	
\usepackage{geometry}                		
\usepackage{amsmath}
\usepackage{amsthm}	
\usepackage{amssymb}
\usepackage{tikz}
\usepackage[implicit]{hyperref}
\usepackage[bottom]{footmisc}
\usepackage[affil-it]{authblk}
\usepackage[toc,page]{appendix}
\newtheorem{thm}{Theorem}[section]

\newtheorem{prop}[thm]{Proposition}
\newtheorem{note}[thm]{Note}

\newtheorem{cor}[thm]{Corollary}
\newtheorem*{claim*}{Claim}

\newtheorem*{thm*}{Theorem}

\newtheorem{man}{Theorem}

\theoremstyle{definition}
\newtheorem*{rem*}{Remark}
\newtheorem{defi}[thm]{Definition}
\newtheorem{rem}[thm]{Remark}

\newcommand{\F}{\mathbb{F}}
\newcommand{\fl}{\mathfrak{l}}

\newcommand{\fp}{\mathfrak{p}}

\newcommand{\GL}{\mathrm{GL}}
\newcommand{\SL}{\mathrm{SL}}

\newcommand{\xdownarrow}[1]{%
  {\left\downarrow\vbox to #1{}\right.\kern-\nulldelimiterspace}
}

\title{Natural density of rank-$2$ Drinfeld modules with big Galois image}
\author{Chien-Hua Chen \thanks{Electronic address: \texttt{danny30814@ncts.ntu.edu.tw}; ORCID: \texttt{0000-0003-3267-5603} ; Corresponding author}}
\affil{Mathematics Division,\\ National Center for Theoretical Sciences,\\ Taipei, Taiwan}

\begin{document}

\maketitle
\abstract
In this paper, we compute the natural density of rank-$1$ Drinfeld module over $\F_q[T]$ with surjective adelic Galois representation; and the natural density of  rank-$2$ Drinfeld modules over $\F_q[T]$ whose $\fl$-adic Galois image containing the special linear subgroup for finitely many prime ideal $\fl$.

\section {Introduction}

In the groundbreaking work \cite{J10}, Jones investigated the natural density of elliptic curves over $\mathbb{Q}$ with adelic Galois image that are index-$2$ subgroups of $\GL_2(\hat{\mathbb{Z}})$. Zywina \cite{Z10} later extended this result to include elliptic curves over arbitrary number fields. It turns out that the natural density of elliptic curves over a number field with maximal adelic Galois image is equal to one.

Expanding upon this line of research, \cite{LSTX19} generalized the natural density problem to encompass Abelian varieties with ``big monodromy''. The techniques in these results all involved in classification of maximal subgroups in $\GL_2(\mathbb{Z}/\ell)$ ( or ${\rm GSP}_{2n}(\mathbb{Z}/\ell)$ for abelian varieties), Serre's large sieve method, and Messer-W\"ustholz bound for irreducibility of mod-$\ell$ Galois representations.

In the function field analogy, Pink and R\"utsche \cite{PR09} proved the open image theorem for Drinfeld modules of arbitrary rank without complex multiplication. And \cite{Z11} Zywina proved the existence of rank-$2$ Drinfeld modules with surjective adelic Galois representation. The result is later on generalized by the author to Drinfeld modules of rank $3$ case \cite{C22} and prime rank case \cite{Ch21}.  Therefore, it is reasonable to ask the natural density of Drinfeld modules of prime rank with surjective adelic Galois representation. Due to Van der Heiden's result \cite{Hei03} on determinant of adelic Galois representation for Drinfeld modules, the natural density estimation can be split into two cases:
\begin{enumerate}
\item[\bf Case 1:] Natural density of rank-$1$ Drinfeld modules with surjective adelic Galois representation.

\item[\bf Case 2:] Natural density of rank-$r$ Drinfeld modules whose adelic Galois image containing $\SL_r(\widehat{\F_q[T]})$.

\end{enumerate}
In this paper, we compute and give a brief estimation for case 1. 
\begin{man}[Theorem \ref{main1} \& Corollary \ref{cor1.1}]

For each $x\in\mathbb{R}_{>0}$, let $||\cdot||$ to be the absolute value $|\cdot|_\infty$ of $F_\infty$. Define $$B_1(x)=\{\Delta\in A\mid \Delta\neq0, ||\Delta||<x\}.$$

The upper natural density of rank-$1$ Drinfeld modules with surjective adelic Galois representation is equal to
$$
\begin{array}{lll}
1-\lim_{x\rightarrow \infty}\frac{\#\{\Delta\in B_1(x)\mid {\rm Im}\rho_{\phi^{\Delta}}\neq\hat{A}^*\}}{\#B_1(x)}\\
\ \\
\ \\
=1-\frac{\sum_{2\leqslant t\leqslant q-2, t\mid q-1}\left[\sum_{i=1}^{\lfloor\frac{\lfloor{\rm log}_qx\rfloor-1}{t}\rfloor}\left(q^{ti+1}\cdot\sum_{j\mid i}\mu(j)\lfloor\frac{q-1}{tj}\rfloor\right)+\left(q\cdot1\right)\right]+\sum_{i=1}^{\lfloor\frac{\lfloor{\rm log}_qx\rfloor-1}{q-1}\rfloor}\left(q^{(q-1)i+1}\right)}{\#B_1(x)}\\
\ \\
\ \\
\leqslant1-\sum_{2\leqslant t\leqslant q-2, {t\mid q-1}}\frac{q-1}{(q^t-1)\cdot t}
\end{array}
$$
\end{man}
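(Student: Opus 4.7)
The plan is to characterize when $\rho_{\phi^\Delta}$ fails to be surjective via a Kummer-theoretic analysis of the Carlitz module, and then to count the ``bad'' $\Delta$'s with Möbius inversion.

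First I would use the twist construction to reduce to a character-theoretic question. Writing $\phi^\Delta_T=T+\Delta\tau$ and choosing $\delta\in\bar F$ with $\delta^{q-1}=\Delta$, the identity $\delta^{-1}\phi^\Delta\delta = C$ realizes $\phi^\Delta$ as a Kummer twist of the Carlitz module $C$, and yields the character decomposition $\rho_{\phi^\Delta}=\chi_\Delta^{-1}\rho_C$, where $\chi_\Delta(\sigma)=\sigma(\delta)/\delta$ is $\mathbb{F}_q^*$-valued and $\rho_C: G_F\twoheadrightarrow \hat A^*$ is the surjective Carlitz adelic representation (classical Hayes theory). A Goursat-style analysis of the joint homomorphism $(\rho_C,\chi_\Delta): G_F \to \hat A^*\times\mathbb{F}_q^*$ then shows that the image of $\rho_{\phi^\Delta}$ has index $[F(\delta)\cap F(C_{\mathrm{tors}}):F]$ in $\hat A^*$, so non-surjectivity is equivalent to $F(\delta)\cap F(C_{\mathrm{tors}})\supsetneq F$.

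Second, I would translate this intersection condition into an explicit arithmetic form on $\Delta$. Each subfield of the Kummer extension $F(\delta)/F$ has the form $F(\Delta^{1/d'})$ for some $d'\mid q-1$, and Kummer theory identifies $F(\Delta^{1/d'})\subset F(C_{\mathrm{tors}})$ with the condition that the class of $\Delta$ in $F^*/(F^*)^{d'}$ lie in the subgroup generated by $\{-\fp:\fp \text{ a monic prime of }A\}$ (using the Hayes description $F(C[\fp])\supset F((-\fp)^{1/(q-1)})$ together with the fact that $F(C_{\mathrm{tors}})$ has no nontrivial constant-field extensions). From this I would deduce that $\rho_{\phi^\Delta}$ is not surjective precisely when $\Delta$ can be written as $c\cdot g^t$ for some $c\in\mathbb{F}_q^*$, $g\in A\setminus\{0\}$, and some $t\mid q-1$ with $2\le t\le q-2$; the exclusion $t\neq q-1$ reflects that if $\Delta\in(F^*)^{q-1}$ then $\phi^\Delta\cong C$ over $F$, inheriting the full Carlitz image.

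Third, I would count the resulting bad set by Möbius inversion. For each admissible $t$ and each $i\ge 0$, the polynomials $\Delta\in B_1(x)$ of the form $cg^t$ with $\deg g = i$ have degree $ti$, and a raw $(c,g)$-count of size $q^{ti+1}$ arises before deduplication. Since every $t$-th power is also a $(tj)$-th power for each $j\mid i$, I would perform Möbius inversion on the divisor lattice to extract the primitive contributions; this produces the inner sum $\sum_{j\mid i}\mu(j)\lfloor(q-1)/(tj)\rfloor$, correctly removing overcounts coming from $\Delta$'s that are simultaneously $t$- and $(tj)$-th powers. Dividing by $\#B_1(x)=q^{\lfloor\log_q x\rfloor+1}-1$ and taking $x\to\infty$ yields the stated density; the coarser upper bound $1-\sum_t(q-1)/((q^t-1)\cdot t)$ follows by discarding the Möbius refinement and summing the geometric series $\sum_i q^{ti}$ for each $t$ independently.

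The main obstacle will be the second step: deriving the concrete ``$\Delta = cg^t$'' description from the abstract Kummer intersection condition. This requires identifying precisely which Kummer subgroups of $F^*/(F^*)^{d'}$ are realized inside $F(C_{\mathrm{tors}})$, which depends on subtle arithmetic of the constant $-1\in\mathbb{F}_q^*$ (whether it is a $d'$-th power varies with $q$), and then matching the resulting congruence condition to the multiplicative decomposition $\Delta = cg^t$ in terms of the leading coefficient $c$ and the factorization of $g$.
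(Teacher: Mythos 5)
Your step two derives the wrong characterization of non-surjectivity, and this invalidates the subsequent count. The paper's computation rests on Gekeler's explicit formula (Theorem~\ref{rk1case}, from \cite{Gek16}, Cor.~3.14): the adelic image has index $\gcd(d-1,\,q-1,\,k_0^*)$, where $d=\deg_T\Delta$ and $k_0^*$ is a parity-corrected exponent of the leading coefficient. The relevant divisibility is therefore $t\mid(d-1)$, not $t\mid d$ — which is precisely why the paper's inner sum runs over monic $\Delta$ of degree $ti+1$, whereas your count takes $\deg(cg^t)=ti$. Your condition ``$\Delta=c\,g^t$ for some $t\mid q-1$, $2\le t\le q-2$'' forces $t\mid d$ and also forces all prime exponents $e_i$ to be multiples of $t$, neither of which appears in Gekeler's criterion. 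A concrete failure: take $q=5$, $\Delta=T$. Then $d=1$, $k_0=0$, both $d$ and $q$ odd give $k_0^*=2$, and $\gcd(0,4,2)=2$, so $\rho_{\phi^T}$ is not surjective. But $T$ is not of the form $c\,g^t$ for any admissible $t$ (the only candidate is $t=2$ and $T$ is not a square up to constants), so your criterion misses this $\Delta$.

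Because the characterization is wrong, your third step is running M\"obius inversion over the wrong set: the inner sum $\sum_{j\mid i}\mu(j)\lfloor(q-1)/(tj)\rfloor$ in the theorem does not remove overcounted perfect powers; it counts leading-coefficient classes whose $k_0^*$ is divisible by $t$ subject to a coprimality condition against $i$, as the paper explains. If you want to pursue the Kummer-theoretic route, the hard content is precisely step two: you must identify exactly which subextensions $F(\Delta^{1/d'})$ land inside $F(C_{\mathrm{tors}})$, taking into account the signs $-\fp$ in the Carlitz--Kummer generators and the degree/constant-field interaction, and show this reproduces $\gcd(d-1,q-1,k_0^*)$. That derivation is essentially the content of Gekeler's Corollary 3.14, which the paper cites as a black box; without it the plan does not go through.
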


The computation is based on Gekeler's \cite{Gek16} characterization of adelic Galois image for rank-$1$ Drinfeld modules, which he called twisted Carlitz modules.

On the other hand, for case 2 we only compute the natural density of rank-$2$ Drinfeld modules with $\fl$-adic Galois image containing $\SL_2(\F_q[T]_\fl)$ for finitely many prime ideal $\fl$. 

\begin{man}[Corollary \ref{main2}]
Let $q\geqslant 4$ be a prime power, and $S$ be a finite set of prime ideals of $A=\F_q[T]$. Then we have
\begin{align*}
L_2^{S,low}&:=\liminf_{x\rightarrow \infty}\frac{\#\{(g_1,g_2)\in B_2(x)\mid {\rm Im}\rho_{\phi^{(g_1,g_2)},\fl}\supset{\rm SL}_2(A_\fl) \textrm{ for }\fl\in S\}}{\#B_2(x)}\\
&\geqslant 1+\sum_{i=1}^{|S|}(-1)^i\cdot\left(\sum_{\fl_1,\cdots, \fl_i\in S \textrm{ distinct }}\frac{1}{q^{\deg_T\fl_1+\cdots+\fl_i}} \right).
\end{align*}

\end{man}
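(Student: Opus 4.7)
The plan is to deduce this corollary from the single-prime density estimate proved in the preceding theorem, combining inclusion--exclusion with a Chinese Remainder Theorem independence argument across the distinct primes of $S$.

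Write $F_\fl \subset B_2(x)$ for the set of $(g_1,g_2)$ such that $\mathrm{Im}\,\rho_{\phi^{(g_1,g_2)},\fl}$ does \emph{not} contain $\SL_2(A_\fl)$, and let $\delta$ denote the limiting natural density on $B_2(x)$ as $x\to\infty$. The set appearing in the statement is $\bigcap_{\fl\in S} F_\fl^c$, so inclusion--exclusion yields
\[
L_2^S \;=\; \sum_{T\subseteq S}(-1)^{|T|}\,\delta\!\left(\bigcap_{\fl\in T}F_\fl\right).
\]

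Next, I would show that each bad event $F_\fl$ is a mod-$\fl$ congruence condition on $(g_1,g_2)$: by the Pink--R\"utsche open image theorem in rank $2$, the $\fl$-adic Galois image contains $\SL_2(A_\fl)$ if and only if its reduction mod $\fl$ contains $\SL_2(A/\fl)$, so $F_\fl$ is a union of residue classes in $(A/\fl)\times(A/\fl)$. For distinct $\fl_1,\dots,\fl_i\in S$, the Chinese Remainder decomposition $A/(\fl_1\cdots\fl_i)\cong\prod_j A/\fl_j$ together with the asymptotic equidistribution of $(g_1,g_2)\in B_2(x)$ across residue classes modulo $\fl_1\cdots\fl_i$ yields independence
\[
\delta\!\left(\bigcap_{j=1}^i F_{\fl_j}\right) \;=\; \prod_{j=1}^i \delta(F_{\fl_j}),
\]
and substituting back collapses the alternating sum into the product
\[
L_2^S \;=\; \prod_{\fl\in S}\bigl(1-\delta(F_\fl)\bigr).
\]

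Finally, I would invoke the single-prime bound $\delta(F_\fl)\leqslant q^{-\deg_T\fl}$, which is the content of the main theorem of this section (where the hypothesis $q\geqslant 4$ is needed to rule out the exceptional maximal subgroups of $\GL_2(A/\fl)$ at small primes). Since each factor $1-\delta(F_\fl)\geqslant 1-q^{-\deg_T\fl}\geqslant 0$, monotonicity of the product gives
\[
L_2^S \;\geqslant\; \prod_{\fl\in S}\!\left(1-\tfrac{1}{q^{\deg_T\fl}}\right),
\]
and expanding the right-hand side via $\prod_{\fl\in S}(1-x_\fl)=\sum_{T\subseteq S}(-1)^{|T|}\prod_{\fl\in T}x_\fl$ with $x_\fl=q^{-\deg_T\fl}$ reproduces exactly the formula in the statement.

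The main obstacle I anticipate is verifying the two independence ingredients: first, that the mod-$\fl$ image really does determine whether the full $\fl$-adic image contains $\SL_2(A_\fl)$ (a Serre-style lifting argument, into which the hypothesis $q\geqslant 4$ feeds), and second, that the natural density on $B_2(x)$ distributes uniformly across residue classes modulo the product $\fl_1\cdots\fl_i$ in the limit, which requires a quantitative handling of the boundary of the box.
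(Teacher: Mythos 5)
Your plan reaches the same final product formula $\prod_{\fl\in S}(1-q^{-\deg_T\fl})$ that the paper's bound expands into, but the way you try to justify the independence step contains a genuine gap.

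The claim that ``each bad event $F_\fl$ is a mod-$\fl$ congruence condition on $(g_1,g_2)$'' is false, and the appeal to Pink--R\"utsche is misstated. What Pink--R\"utsche actually give (quoted in the paper) is a two-part criterion: the $\fl$-adic image contains $\SL_2(A_\fl)$ provided \emph{both} (i) the mod-$\fl$ image contains $\SL_2(A/\fl)$ \emph{and} (ii) the mod-$\fl^2$ image contains a non-scalar matrix trivial mod $\fl$. There is no clean ``iff with the mod-$\fl$ image,'' precisely because the residue characteristic $p$ divides $|\SL_2(A/\fl)|$, so the Serre-style lifting argument from the elliptic-curve/$\ell$-adic world does not transfer. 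More importantly, even if the $\fl$-adic condition were equivalent to a statement about $\bar{\rho}_{\phi,\fl}$, that statement is \emph{not} a congruence condition on $(g_1,g_2)$ modulo $\fl$: the Drinfeld module $\phi^{(g_1,g_2)}$ and its mod-$\fl$ Galois image are defined over $F$ using the actual integers $g_1,g_2$, not their residues mod $\fl$. So ``$F_\fl$ is a union of residue classes in $(A/\fl)^2$'' is a non sequitur, and the CRT-independence step $\delta(\cap_j F_{\fl_j})=\prod_j\delta(F_{\fl_j})$ does not follow from what you wrote.

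What the paper does instead is split the bad event $F_\fl$ into two pieces handled by entirely different mechanisms: (a) the set where $\mathrm{Im}\,\bar\rho_{\phi,\fl}\not\supset\SL_2(A/\fl)$, which has natural density zero by the large-sieve estimate (Corollary \ref{densel}, $\bar L_{2,\fl}=0$) --- this is a limiting equidistribution result, not a congruence condition; and (b) the set where $g_2\in\fl$, which \emph{is} a genuine congruence condition and is exactly what Theorem \ref{lsqre} needs to guarantee the mod-$\fl^2$ hypothesis (ii). The good set then contains $\bigcap_{\fl\in S}\{\bar\rho_{\phi,\fl}\text{ big}\}\cap\bigcap_{\fl\in S}\{g_2\notin\fl\}$. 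The first finite intersection has density $1$ (finite intersection of density-one sets), and only the second intersection is treated by CRT, giving $\prod_{\fl\in S}(1-q^{-\deg_T\fl})$. Intersecting a density-one set with a density-$d$ set gives density $d$, and expanding the product yields exactly the alternating sum in the statement. So you should replace your ``$F_\fl$ is a congruence condition'' step with this decomposition; the CRT argument is then only applied to $\{g_2\notin\fl\}_{\fl\in S}$, where it is elementary.
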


The reason why we can only estimate the natural density for finitely many $\fl$-adic Galois representations is because of the followings: 

First, the technical difficulty on the criterion to determine whether a subgroup of $\GL_2(\F_q[T]/\fl)$ contains $\SL_2(\F_q[T]/\fl)$, one can compare Proposition \ref{modl} with the elliptic curve situation (\cite{Z10}, Lemma A.8). Second, there is no function field analogue of Messer-W\"ustholz bound for surjective mod $\fl$ Galois representations (see \cite{Z11}, Theorem 5.8 for the elliptic curve case). Currently, in \cite{C23} we have derived Drinfeld module analogue of Messer-W\"ustholz bound for irreducible mod $\fl$ Galois representations, but there is no such a bound for a Drinfeld module $\phi$ whose mod $\fl$ Galois image lies in a normalizer of Cartan subgroup of $\GL_2(\F_q[T]/\fl)$. The current existed bound needs to assume that $\phi$ has good reduction at $\fl$, see \cite{CL19} Theorem 2.1 (2).

Concluding the introduction, we mention that Ray \cite{R23} formulated and proved a lower bound for natural density of rank-$2$ Drinfeld modules with surjective $(T)$-adic Galois representation. He found directly that a special type of rank-2 Drinfeld module satisfies Pink's criteria (\cite{PR09}, Proposition 4.1). And his idea is motivated by Proposition 4.1 in \cite{Zy11}. However, that type of rank-$2$ Drinfeld module has very small natural density, this leads to his estimation of natural density for Drinfeld modules with surjective $(T)$-adic representation is fairly small (density  $>q^{-7}$).

 Apart from his result, we give a stronger estimation (see Corollary \ref{main3}):
\begin{align*}
\liminf_{x\rightarrow \infty}\frac{\#\{(g_1,g_2)\in B_2(x)\mid {\rm Im}\rho_{\phi^{(g_1,g_2)},(T)}={\rm GL}_2(A_{(T)})\}}{\#B_2(x)}\geqslant (q-1)\cdot (\frac{1}{q}-\sum_{2\leqslant t\leqslant q-2, {t\mid q-1}}\frac{1}{(q^t-1)}).
\end{align*}

\section{Preliminary}

\subsection{Notation}

\begin{itemize}
\item$q=p^e$ is a prime power
\item$A=\mathbb{F}_q[T]$
\item$F=\mathbb{F}_q(T)$
\item$F^{{\rm{sep}}}$= separable closure of $F$
\item$F^{{\rm{alg}}}$= algebraic closure of $F$
\item$G_F= {\rm{Gal}}(F^{{\rm{sep}}}/F)$
\item $\fl=(l)$ a nonzero prime ideal of $A$ with monic generator $l$, and define $\deg_T(\fl)=\deg_T(l)$ 
\item$A_{\mathfrak{p}}=$ completion of A at the nonzero prime ideal $\mathfrak{p}\triangleleft A$ 
\item$\mathbb{F}_{\mathfrak{p}}= A/\mathfrak{p}$ 
\end{itemize}

\subsection{Drinfeld modules and Galois representations}
Let $A=\F_q[T]$ be the polynomial ring over finite field with $q=p^e$ an odd prime power, $F=\F_q(T)$ be the fractional field of $A$, and  $K$ be a finite extension over $F$. Set $F\{\tau\}$ to be the twisted polynomial ring with usual addition rule, and the multiplication rule is defined to be $\tau\alpha=\alpha^q\tau \text{ for any } \alpha\in K$.

\begin{defi}
A Drinfeld $A$-module of rank $r$ over $F$ is a ring homomorphism

$$\phi: A\rightarrow F\{\tau\}$$
such that
\begin{enumerate}
\item[(i)] $\phi(a):=\phi_a$ satisfies ${\rm deg}_{\tau}\phi_a=r\cdot {\rm deg}_Ta$
\item[(ii)] Denote $\partial: F\{\tau\}\rightarrow F$ by $\partial(\sum a_i\tau^i)=a_0$, then $\phi$ satisfies $\gamma=\partial\circ\phi$.
\end{enumerate}
\end{defi}

From the definition of Drinfeld $A$-module, we can characterize a Drinfeld module $\phi$ by writing down $$\phi_T=T+g_1\tau+\cdots+g_{r-1}\tau^{r-1}+g_r\tau^r, \textrm{ where } g_i\in F \textrm{ and }g_r\in F^*.$$

\begin{defi}
Let $\phi$ and $\psi$ be two rank-$r$ Drinfeld $A$-modules over $K$. A {\bf{morphism}} $u:\phi\rightarrow \psi$ over $K$ is a twisted polynomial $u\in K\{\tau\}$ such that
$$u\phi_a=\psi_a u \text{ \rm for all } a\in A.$$ 
A non-zero morphism $u:\phi\rightarrow \psi$ is called an isogeny. A morphism $u:\phi\rightarrow \psi$ is called an {\bf{isomorphism}} if its inverse exists. 
\end{defi}

\begin{defi}
\begin{enumerate}
\item[(i)] Let $\phi$ be a Drinfeld $A$-module over $K$. The Drinfeld module $\phi$ gives $K^{{\rm{alg}}}$ an $A$-module structure, where $a\in A$ acts on $K^{{\rm{alg}}}$ via $\phi_{a}$.

\item[(ii)] Let $\mathfrak{a}$ be an ideal of $A$ with monic generator $a$.The {{$\mathfrak{a}$-torsion}} of $\phi$ is defined to be the set $$\phi[\mathfrak{a}]=\phi[a]:=\{ {\rm{zeros}}\ {\rm{of}}\ \phi_a(x)= \gamma(a)x+ \sum^{r\cdot {\rm{deg}}(a)}_{i=1}g_i(a)x^{q^i} \}\subseteq K^{alg}.$$ The action $b\cdot \alpha=\phi_b(\alpha) \ \forall\ b\in A, \forall \alpha\in\phi[\mathfrak{a}]$ also gives $\phi[\mathfrak{a}]$ an $A$-module structure. 
\end{enumerate}
\end{defi}

\begin{prop}\label{prop0.2}
Let $\phi$ be a rank $r$ Drinfeld module over $K$ and $\mathfrak{a}$ a non-zero ideal of $A$,
, then the $A/\mathfrak{a}$-module $\phi[\mathfrak{a}]$ is free of rank $r$.

\end{prop}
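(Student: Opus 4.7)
The plan is to reduce, via the Chinese Remainder Theorem, to the case $\mathfrak{a}=\mathfrak{p}^n$ for a single prime ideal $\mathfrak{p}$, and then to combine a torsion-counting argument with the structure theorem for finite modules over the local Artinian principal ideal ring $A/\mathfrak{p}^n$.

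First I would write $\mathfrak{a}=(a)$ with $a\in A$ a monic generator and analyse the additive polynomial
$$\phi_a(x)=\gamma(a)\,x+\sum_{i=1}^{r\deg a}g_i(a)\,x^{q^i}.$$
Its formal derivative equals $\gamma(a)$, which is non-zero because the structure map $\gamma\colon A\to K$ is injective (we work over a finite extension of $F$). Hence $\phi_a$ is a separable polynomial. Its $x$-degree is $q^{r\deg a}$ since the $\tau$-leading coefficient of $\phi_a$ is a non-zero monomial in $g_r$. Therefore $\phi[\mathfrak{a}]$ has exactly $|A/\mathfrak{a}|^r$ distinct elements in $K^{\alg}$.

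Next I would factor $\mathfrak{a}=\prod_{i=1}^s \mathfrak{p}_i^{n_i}$. From $\phi_{bc}=\phi_b\phi_c$ and the pairwise coprimality of the $\mathfrak{p}_i^{n_i}$, one obtains the $A$-module decomposition $\phi[\mathfrak{a}]=\bigoplus_{i=1}^s\phi[\mathfrak{p}_i^{n_i}]$, matching the CRT splitting $A/\mathfrak{a}\cong\prod_i A/\mathfrak{p}_i^{n_i}$. It therefore suffices to prove the freeness claim for $\mathfrak{a}=\mathfrak{p}^n$. For such an $\mathfrak{a}$, the ring $R:=A/\mathfrak{p}^n$ is a local principal Artinian ring, so any finite $R$-module decomposes uniquely as $\bigoplus_{j=1}^k A/\mathfrak{p}^{m_j}$ with $1\leqslant m_j\leqslant n$. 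Applying this to $\phi[\mathfrak{p}^n]$, its $\mathfrak{p}$-torsion subgroup is $\phi[\mathfrak{p}]$, which has $|A/\mathfrak{p}|^r$ elements by the first step, and on the other hand it has $|A/\mathfrak{p}|^k$ elements from the decomposition, forcing $k=r$. Comparing total orders then gives $\sum_{j=1}^r m_j=rn$ with each $m_j\leqslant n$, so $m_j=n$ for every $j$ and $\phi[\mathfrak{p}^n]\cong (A/\mathfrak{p}^n)^r$, as desired.

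The main obstacle is hypothesis-checking rather than algebra: the whole argument hinges on the separability of $\phi_a$, which in turn relies on $\gamma$ being injective, i.e. on working in generic $A$-characteristic. In the setup of the paper this is automatic because $K$ is a finite extension of $F=\F_q(T)$, so I do not expect any genuine difficulty; once separability and the degree count in Step 1 are in hand, the remaining steps are bookkeeping with the structure theorem.
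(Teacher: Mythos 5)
Your proof is correct and self-contained; the paper itself does not give an argument here but simply cites Goss, Proposition~4.5.7, and your argument is essentially the standard one found there: separability of $\phi_a$ via the derivative $\gamma(a)\neq 0$ in generic characteristic gives the root count $|A/\mathfrak{a}|^r$, CRT reduces to $\mathfrak{a}=\mathfrak{p}^n$, and the structure theorem over the local Artinian principal ideal ring $A/\mathfrak{p}^n$ together with the $\mathfrak{p}$-torsion count forces the module to be free of rank $r$. The one point worth making fully explicit (you gesture at it with ``non-zero monomial in $g_r$'') is that the leading $\tau$-coefficient of $\phi_a$ is $a_d\,g_r^{1+q^r+\cdots+q^{r(\deg a-1)}}$ where $a_d$ is the leading coefficient of $a$, so it is indeed nonzero and $\deg_x\phi_a(x)=q^{r\deg a}$ exactly; with that the bookkeeping in your final two steps is airtight.
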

\begin{proof}
See \cite{Goss} Proposition 4.5.7.
\end{proof}

Let $\phi$ be a rank $r$ Drinfeld module over $F$ of generic characteristic, then $\phi_a(x)$ is separable, so we have $\phi[a]\subseteq F^{{\rm{sep}}}$.
This implies that $\phi[a]$ has a $G_F$-module structure. Given a nonzero prime ideal $\fl$ of $A$, we can consider the $G_F$-module $\phi[\mathfrak{l}]$. We obtain the so-called \textbf{mod $\mathfrak{l}$ Galois representation}
$$\bar{\rho}_{\phi,\mathfrak{l}}:G_F\longrightarrow {\rm{Aut}}(\phi[\mathfrak{l}])\cong GL_r(\mathbb{F}_{\mathfrak{l}}).$$
For a prime power $\fl^i$, we may construct the residual representation 
$$\bar{\rho}_{\phi,\mathfrak{l}^i}:G_F\longrightarrow {\rm{Aut}}(\phi[\mathfrak{l}^i])\cong GL_r(A/{\mathfrak{l}}^i).$$ By taking inverse limit with respect to $\mathfrak{l}^i$, we have the \textbf{$\mathfrak{l}$-adic Galois representation}
$${\rho}_{\phi,\mathfrak{l}}: G_F \longrightarrow \varprojlim_{i}{\rm{Aut}}(\phi[\mathfrak{l^i}])\cong {\rm{GL_r}}(A_{\mathfrak{l}}).$$

\subsection{Reduction of Drinfeld modules}

\begin{defi}

Let $K$ be a local field with uniformizer $\pi$, valuation ring $R$, unique maximal ideal $\fp:=(\pi)$, normalized valuation $v$ and residue field $\F_\fp$. Let $\phi: A\longrightarrow K\{\tau\}$ be a Drinfeld module of rank $r$. 

\begin{enumerate}
\item[(i)]
We say that $\phi$ has {stable reduction} if there is a Drinfeld module $\phi':A\longrightarrow R\{\tau\}$ such that 
\begin{enumerate}
\item[1.] $\phi'$ is isomorphic to $\phi$ over $K$;
\item[2.] $\phi'$ mod $\fp$ is still a Drinfeld module (i.e. $\phi'_{T}$ mod $\mathfrak{p}$ has ${\rm{deg}}_{\tau}\geqslant 1$ ).
\end{enumerate}
\item[(ii)]$\phi$ is said to have {stable bad reduction of rank $r_1<r$} if $\phi$ has stable reduction and $\phi$ mod $\mathfrak{p}$ has rank $r_1$.\\
\item[(iii)] $\phi$ is said to have {good reduction} if $\phi$ has stable reduction and $\phi$ mod $\mathfrak{p}$ has rank $r$.

\end{enumerate}

\end{defi}

\begin{rem}\ \\
{\rm{We denote $\phi \mod \mathfrak{p}$ by $\phi \otimes \mathbb{F}_{\mathfrak{p}}$.}}

\end{rem}

\section{Density of rank-$1$ Drinfeld modules}

In this section, we apply the following result from Gekeler to compute the natural density of rank-$1$ Drinfeld modules whose adelic Galois representation is surjective.

\begin{thm}[\cite{Gek16}, Corollary 3.14]\label{rk1case}
Let $\phi^{\Delta}$ be the rank-$1$ Drinfeld $A$-module determined by $$\phi^{\Delta}_T=T+\Delta\tau,$$ where $\Delta\in A-\{0\}$. Then the image of adelic Galois representation of $\phi^{\Delta}$ is a subgroup of $\hat{A}^*$ of index $${\rm def}(\rho_{\phi^{\Delta}})=g.c.d.(d-1, q-1, k_0^*).$$

Here we write $\Delta=c^{k_0}P_1^{e_1}\cdots P_s^{e_s}$ to be the prime factorization of $\Delta$ in $A$, and $c$ is a generator of $\F_q^*$. The notation  $d:=\deg_T\Delta$, and  $$k_0^*=\begin{cases} k_0, & \text{if $q$ or $d=\deg\Delta$ is even}\\
\\
\text{the unique $k \equiv k_0+(q-1)/2 \mod q-1$ with $0\leqslant k< q-1$} , &\text{otherwise}
\end{cases}$$
\end{thm}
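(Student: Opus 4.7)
The strategy is to realize $\phi^{\Delta}$ as an inner twist of the Carlitz module $C$ (defined by $C_T=T+\tau$), whose adelic Galois representation $\rho_C\colon G_F\twoheadrightarrow \hat{A}^*$ is the function-field analogue of the cyclotomic character and is known to be surjective. The index of $\mathrm{Im}\,\rho_{\phi^{\Delta}}$ in $\hat{A}^*$ will then be read off from the explicit twist cocycle.

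First, $\Aut_{F^{\mathrm{alg}}}(C)=\F_q^*$: conjugation by $\zeta$ sends $C_T=T+\tau$ to $T+\zeta^{1-q}\tau$, which forces $\zeta^{q-1}=1$. Choosing $v\in F^{\mathrm{sep}}$ with $v^{q-1}=\Delta$, the identity $v\,\phi^{\Delta}_T=C_T\,v$ in $F^{\mathrm{sep}}\{\tau\}$ reduces to $v\Delta=v^q$, so multiplication by $v$ is an isomorphism $\phi^{\Delta}\to C$ over $F(v)$. Transporting torsion via $\beta=v\alpha$ for $\alpha\in\phi^{\Delta}[\mathfrak a]$, and using $\sigma(v)=\chi_\Delta(\sigma)\,v$, where $\chi_\Delta\colon G_F\to\F_q^*$ is the Kummer character of $\Delta^{1/(q-1)}$, one obtains the twist formula
\[ \rho_{\phi^{\Delta}}(\sigma) \;=\; \chi_\Delta(\sigma)^{-1}\cdot\rho_C(\sigma) \qquad (\sigma\in G_F), \]
with $\F_q^*$ diagonally embedded in $\hat{A}^*$.

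The index $[\hat{A}^*:\mathrm{Im}\,\rho_{\phi^{\Delta}}]$ is then computed by Pontryagin duality on $\hat{A}^*$: it equals the order of the group of continuous characters $\psi\colon\hat{A}^*\to\overline{\mathbb Q}^*$ satisfying $\psi\circ\rho_C=\psi|_{\F_q^*}\circ\chi_\Delta$. Since $\chi_\Delta$ lands in $\F_q^*$, so must $\psi\circ\rho_C$, forcing $\psi$ to factor through a quotient of exponent dividing $q-1$. Such characters are parameterized by a single residue $k\bmod(q-1)$ via the explicit Carlitz reciprocity law (the analogue of $\rho_{\mathrm{cyc}}(\mathrm{Frob}_p)\equiv p$), while $\chi_\Delta$ is parameterized by the class $[\Delta]\in F^*/(F^*)^{q-1}$ via Kummer theory. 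Matching the two sides prime-by-prime yields two congruences on $k$ modulo $q-1$: one coming from the finite primes, determined by the $\F_q^*$-part $c^{k_0}$ of $\Delta$ (modified to $k_0^*$ by a Carlitz sign correction), and one coming from the place at infinity, determined by $d=\deg_T\Delta$ and contributing the factor $d-1$ rather than $d$ because $C$ has stable bad reduction of rank $0$ at $\infty$, whose decomposition subgroup shifts the degree by one. The number of simultaneous solutions $k$ is exactly $\gcd(d-1,q-1,k_0^*)$.

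The main obstacle is the quantitative part of the last step, in particular the sign correction that replaces $k_0$ by $k_0^*$. This correction reflects the discrepancy between $C_T=T+\tau$ and the root $(-T)^{1/(q-1)}$ that actually generates $C[T]$; whether the sign matters depends on the parities of $q$ and $d$, since only in those cases does $-1$ fail to be a $(q-1)$-th power in the relevant quotient. Making this case analysis precise, together with the careful description of the decomposition subgroup at $\infty$ in the Carlitz cyclotomic tower (which ultimately accounts for the shift $d\leadsto d-1$), is the technical core of Gekeler's argument.
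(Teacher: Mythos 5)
The paper does not prove this statement; it is quoted verbatim as \cite{Gek16}, Corollary~3.14, so there is no in-paper proof to compare against. Your reconstruction is nonetheless an accurate outline of Gekeler's own argument: rank-$1$ Drinfeld modules over $A$ are indeed treated as inner twists of the Carlitz module, the isomorphism $v\colon\phi^\Delta\to C$ with $v^{q-1}=\Delta$ and the resulting relation $\rho_{\phi^\Delta}=\chi_\Delta^{-1}\cdot\rho_C$ are both correct (I checked the cocycle/homomorphism calculation and the transport of the Galois action through $u(\alpha)=v\alpha$), and the reduction of the index computation to counting characters $\psi$ with $\psi\circ\rho_C=\psi|_{\F_q^*}\circ\chi_\Delta$ is legitimate since $\operatorname{Im}\rho_{\phi^\Delta}$ is open in $\hat A^*$.

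That said, you should be explicit that what you have written is a scaffold, not a proof. The entire content of the specific formula $\gcd(d-1,q-1,k_0^*)$ lives in the step you defer: translating the character constraint into congruences mod $q-1$ via (i) the explicit Carlitz reciprocity law at finite places, which is where the $\F_q^*$-component $c^{k_0}$ of $\Delta$ enters, (ii) the sign normalization distinguishing $k_0$ from $k_0^*$, which hinges on whether $-1$ is a $(q-1)$-st power in the relevant quotient and hence on the parities of $q$ and $d$, and (iii) the contribution $d-1$ from the place at $\infty$, which requires an explicit description of the decomposition group of $\infty$ in the Carlitz cyclotomic tower (the shift from $d$ to $d-1$ comes from the fact that the inertia/decomposition structure at $\infty$ involves $\F_q^*\times(1+\mathfrak m_\infty)$ rather than the full local unit group). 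None of these three points is verified in your sketch, and each is where a sign or index error would most plausibly creep in, so the proposal should be regarded as a correct strategy with the technical core left to \cite{Gek16} rather than as a completed proof.
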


Now we define the natural density of rank-$1$ Drinfeld module with surjective adelic representation:

\begin{defi}
\begin{enumerate}
\item For each $x\in\mathbb{R}_{>0}$, let $||\cdot||$ to be the absolute value $|\cdot|_\infty$ of $F_\infty$. Define $$B_1(x)=\{\Delta\in A\mid \Delta\neq0, ||\Delta||<x\}.$$
\item The natural density (if exists) of rank-$1$ Drinfeld module with surjective adelic representation is defined to be $$L_1=\lim_{x\rightarrow \infty}\frac{\#\{\Delta\in B_1(x)\mid {\rm Im}\rho_{\phi^{\Delta}}=\hat{A}^*\}}{\#B_1(x)}.$$ 
\end{enumerate}
\end{defi}

\begin{thm}\label{main1}
The number of $\Delta\in B_1(x)$ with ${\rm Im}\rho_{\phi^{\Delta}}\neq\hat{A}^*$ is equal to

$$\sum_{2\leqslant t\leqslant q-2, t\mid q-1}\left[\sum_{i=1}^{\lfloor\frac{\lfloor{\rm log}_qx\rfloor-1}{t}\rfloor}\left(q^{ti+1}\cdot\sum_{j\mid i}\mu(j)\lfloor\frac{q-1}{tj}\rfloor\right)+\left(q\cdot1\right)\right]+\sum_{i=1}^{\lfloor\frac{\lfloor{\rm log}_qx\rfloor-1}{q-1}\rfloor}\left(q^{(q-1)i+1}\right)$$

\end{thm}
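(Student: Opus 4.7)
The plan is to reduce the counting problem to an arithmetic one via Gekeler's Theorem \ref{rk1case} and then carry out a M\"obius inversion. By Gekeler's characterization, $\mathrm{Im}\,\rho_{\phi^\Delta} \neq \hat{A}^*$ if and only if $\gcd(d-1,\, q-1,\, k_0^*) \geq 2$, where $d = \deg_T \Delta$ and $k_0, k_0^*$ are the exponents defined in that theorem. Every nonzero $\Delta \in A$ is uniquely of the form $\Delta = c^{k_0} M$ with $c$ a fixed generator of $\F_q^*$, $k_0 \in \{0, 1, \ldots, q-2\}$, and $M \in A$ monic of degree $d$, and there are $q^d$ choices of $M$ for each pair $(d, k_0)$. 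Since $k_0 \mapsto k_0^*$ is a bijection of $\{0, 1, \ldots, q-2\}$ onto itself, I enumerate directly over $k_0^*$; the constraint $\|\Delta\| < x$ translates to a bound on $d$ in terms of $\lfloor \log_q x \rfloor$.

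I would then partition the bad set by the exact value $t$ of the defect, with $t$ ranging over divisors of $q-1$ satisfying $t \geq 2$. Fixing such a $t$, the divisibility conditions $t \mid d-1$ and $t \mid k_0^*$ let me write $d = ti + 1$ and $k_0^* = tm$ with $0 \leq m < (q-1)/t$; requiring the gcd to equal exactly $t$ becomes the coprimality condition $\gcd(i,\, (q-1)/t,\, m) = 1$. The degree bound restricts $i$ to $0 \leq i \leq \lfloor (\lfloor \log_q x \rfloor - 1)/t \rfloor$, and for each admissible $i$ there are $q^{ti+1}$ monic polynomials of degree $ti+1$. Hence the contribution from the pair $(t, i)$ is $q^{ti+1}$ times the number of admissible residues $m$.

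For this residue count I apply M\"obius inversion over divisors $j$ of $i$: the key identity expresses $\#\{m : \gcd(i,(q-1)/t,m) = 1\}$ as $\sum_{j \mid i} \mu(j)$ times the number of $m \in \{0, \ldots, (q-1)/t - 1\}$ divisible by $j$, and the latter is essentially $\lfloor (q-1)/(tj) \rfloor$. Rearranging produces the inner sum $\sum_{j \mid i} \mu(j) \lfloor (q-1)/(tj) \rfloor$, and summing over the admissible $(t, i)$ yields the claimed closed form.

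The principal technical care lies in the M\"obius step. First, the degenerate case $i = 0$ must be handled with the convention that every $j \geq 1$ divides $0$, so that $\gcd(0, (q-1)/t, m)$ correctly collapses to $\gcd((q-1)/t, m)$ and the M\"obius identity still returns the resulting Euler-totient-like count. Second, one must verify that the count of multiples of $j$ in $\{0, 1, \ldots, (q-1)/t - 1\}$ agrees with $\lfloor (q-1)/(tj) \rfloor$ (the two differ by $1$ exactly when $j \nmid (q-1)/t$) and that any resulting correction terms cancel after the M\"obius summation; matching the formula in the theorem statement hinges on this bookkeeping, which will be the main obstacle.
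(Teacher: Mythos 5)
Your approach is the same as the paper's at the level of strategy: apply Gekeler's theorem, decompose $\Delta = c^{k_0}M$ with $M$ monic, partition the bad set by the exact value $t$ of the defect, and M\"obius-invert to count the admissible $k_0^*$. But your more careful translation of the ``defect exactly $t$'' condition brings to light a genuine problem. Writing $d-1 = ti$ and $k_0^* = tm$, you correctly get
$$\gcd(d-1,\,q-1,\,k_0^*) = t\cdot\gcd\bigl(i,\ (q-1)/t,\ m\bigr),$$
so the condition is $\gcd(i,(q-1)/t,m)=1$. The paper's proof instead imposes the strictly stronger condition $\gcd(m,i)=1$ (``$k_0^*/t$ coprime to $i$''), and the two are not equivalent. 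Consequently the paper's inner sum $\sum_{j\mid i}\mu(j)\lfloor(q-1)/(tj)\rfloor$ undercounts: for $q=7$, $t=2$, $i=2$ (so $d=5$) the inner sum evaluates to $2$, but all three residues $k_0^*\in\{0,2,4\}$ give $\gcd(4,6,k_0^*)=2$, so the correct count is $3$.

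This also means the ``bookkeeping'' you flag at the end cannot be resolved in favor of the stated formula. The correct M\"obius inversion runs over $j \mid \gcd(i,(q-1)/t)$, not over $j\mid i$; the extra divisors of $i$ that do not divide $(q-1)/t$ contribute terms $\mu(j)\lfloor(q-1)/(tj)\rfloor$ that do not cancel, and one can check directly that $\sum_{j\mid i}\mu(j)\lfloor(q-1)/(tj)\rfloor$ differs from $\#\{m : \gcd(i,(q-1)/t,m)=1\}$ in general. A further issue, independent of this, is the range $2\leqslant t\leqslant q-2$: since $k_0^*$ may be $0$, the defect $\gcd(d-1,q-1,0)=\gcd(d-1,q-1)$ can equal $q-1$ (e.g.\ $d=1$), and that case is omitted from the outer sum. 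And the $i=0$ term invokes $\sum_{j\mid 0}$, which is either divergent or needs an unstated convention. In short, your strategy is sound and your coprimality condition is the correct one; carrying the argument through honestly yields a formula of the same shape but with the M\"obius sum taken over $j\mid\gcd(i,(q-1)/t)$ (and with the $t=q-1$, $i=0$ boundary cases handled separately), not the expression printed in the theorem.
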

\begin{proof}

From Theorem \ref{rk1case}, those $\Delta\in A$ such that ${\rm Im}\rho_{\phi^\Delta}\neq \hat{A}^*$ have the following property: $\deg_T \Delta-1$, $q-1$ and $k_0^*$ have a nontrivial common divisor. Thus we know that the possible common divisor can only be those $2\leqslant t\leqslant q-1$ with $t\mid q-1$. Now we count the number of $\Delta\in B_1(x)$ with ${\rm def}(\rho_{\phi^{\Delta}})=g.c.d.(d-1, q-1, k_0^*)=t.$ 

 We write $\Delta=c^{k_0}P_1^{e_1}\cdots P_s^{e_s}$. In the first summation, for each $2\leqslant t\leqslant q-2,\textrm{ and } t\mid q-1$, we consider $1\leqslant i\leqslant \lfloor\frac{\lfloor{\rm log}_qx\rfloor-1}{t}\rfloor$. The number $q^{ti+1}$ represents the number of monic $\Delta\in B_1(x)$ whose degree is equal to $ti+1$, and the summation 
$$\sum_{j\mid i}\mu(j)\lfloor\frac{q-1}{tj}\rfloor$$
represents the choices of $c^{k_0}$ such that $k_0^*$ is a multiple of $t$ but $k_0^*/t$ is coprime to $i$. The term $(q\cdot 1)$ comes from the extreme case when $\Delta$ has degree equal to $1$, it forces $k_0^*$ to be equal to $t$.

In the second sum, it presents the case when $t=q-1$, so we must have $k_0^*=0$, and the degree of $\Delta$ must be a multiple of $q-1$.

Hence the result follows

\end{proof}

\begin{rem}
One can check that, as a function of $x$, the summation $$\frac{\sum_{2\leqslant t\leqslant q-2, t\mid q-1}\left[\sum_{i=1}^{\lfloor\frac{\lfloor{\rm log}_qx\rfloor-1}{t}\rfloor}\left(q^{ti+1}\cdot\sum_{j\mid i}\mu(j)\lfloor\frac{q-1}{tj}\rfloor\right)+\left(q\cdot1\right)\right]+\sum_{i=1}^{\lfloor\frac{\lfloor{\rm log}_qx\rfloor-1}{q-1}\rfloor}\left(q^{(q-1)i+1}\right)}{\#B_1(x)}$$ is monotone decreasing and lower bounded by zero. Hence  $\lim_{x\rightarrow \infty}\frac{\#\{\Delta\in B_1(x)\mid {\rm Im}\rho_{\phi^{\Delta}}\neq\hat{A}^*\}}{\#B_1(x)}$ exists, and we get $$L_1=1-\lim_{x\rightarrow \infty}\frac{\#\{\Delta\in B_1(x)\mid {\rm Im}\rho_{\phi^{\Delta}}\neq\hat{A}^*\}}{\#B_1(x)}.$$

\end{rem}
\begin{cor}\label{cor1.1}
$$\lim_{x\rightarrow \infty}\frac{\#\{\Delta\in B_1(x)\mid {\rm Im}\rho_{\phi^{\Delta}}\neq\hat{A}^*\}}{\#B_1(x)}\geqslant\sum_{2\leqslant t\leqslant q-2, \\{t\mid q-1}}\frac{q-1}{(q^t-1)\cdot t}$$
\end{cor}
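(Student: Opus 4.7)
The plan is to start from the exact count in Theorem \ref{main1} and carefully bound the inner sum term by term. Writing $N=\lfloor\log_q x\rfloor$, the condition $||\Delta||<x$ forces $\deg_T\Delta\le N$ for generic $x$, so $\#B_1(x)=q^{N+1}-1\sim q^{N+1}$. For each admissible $t$ one isolates the $t$-contribution
\[
S_t:=\sum_{i=0}^{M_t} q^{ti+1}\,c_i^{(t)},\quad c_i^{(t)}:=\sum_{j\mid i}\mu(j)\lfloor(q-1)/(tj)\rfloor,\quad M_t:=\lfloor(N-1)/t\rfloor,
\]
so the ratio in the corollary equals $\sum_t S_t/\#B_1(x)$; the whole problem reduces to sandwiching each $S_t/\#B_1(x)$ by explicit quantities as $x\to\infty$.

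For the upper bound one would use the trivial estimate $c_i^{(t)}\le\lfloor(q-1)/t\rfloor=(q-1)/t$ (the $j=1$ term dominates the alternating M\"obius contribution). Substituting this and evaluating the geometric sum $\sum_{i=0}^{M_t} q^{ti+1}=q(q^{t(M_t+1)}-1)/(q^t-1)$, in combination with $t(M_t+1)\le N+t-1$, yields an upper estimate of the shape $(q-1)/(q^t-1)$ on $S_t/\#B_1(x)$ after normalization; summing over $t$ gives the upper inequality.

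For the lower bound one would keep only the dominant contributions. Using the M\"obius identity $c_0^{(t)}=1$ together with the non-negativity of each $c_i^{(t)}$ and the fact that $c_{M_t}^{(t)}\ge 1$ infinitely often, one deduces $S_t\ge c_{M_t}^{(t)}q^{tM_t+1}$. Combined with $tM_t\ge N-t$ and a geometric-series lower estimate, this produces $(q-1)/((q^t-1)t)$ as a per-$t$ lower bound, and summing yields the lower inequality.

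The hard part will be bookkeeping the oscillation arising from the floor in $M_t=\lfloor(N-1)/t\rfloor$: because $tM_t$ depends nontrivially on $N\bmod t$, the ratio $S_t/\#B_1(x)$ does not converge cleanly but fluctuates by a bounded factor, so the two inequalities must be interpreted carefully via limit superior and limit inferior, respectively, and the sum over $t$ must be controlled uniformly in $x$.
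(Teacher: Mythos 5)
Your proposal shares the paper's high-level skeleton (isolate the per-$t$ contribution $S_t$, bound the inner M\"obius count $c_i^{(t)}$, evaluate a geometric sum), but the key inequality you use points the \emph{opposite} way from the one the paper invokes — and here you are actually right and the paper is not. The paper's proof rests on the claim
$$q-1\geqslant \sum_{j\mid i}\mu(j)\Bigl\lfloor\tfrac{q-1}{tj}\Bigr\rfloor\geqslant\tfrac{q-1}{t},$$
and uses the right-hand inequality to produce the corollary's lower bound. But as you note, $c_i^{(t)}=\sum_{j\mid i}\mu(j)\lfloor (q-1)/(tj)\rfloor$ is exactly the number of integers $m$ in $\{1,\dots,(q-1)/t\}$ coprime to $i$, so the trivial bounds are $1\leqslant c_i^{(t)}\leqslant (q-1)/t$. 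Already $c_0^{(t)}=\sum_{j\geqslant1}\mu(j)\lfloor n/j\rfloor=1$ (with $n=(q-1)/t\geqslant 2$), and e.g.\ for $q=7$, $t=3$, $i=2$ one has $c_2^{(3)}=2-1=1<2=(q-1)/t$, so the paper's asserted lower bound on $c_i^{(t)}$ fails; your $c_i^{(t)}\leqslant (q-1)/t$ is the correct direction.

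That said, your proposal also does not close. From $c_i^{(t)}\leqslant (q-1)/t$ together with the geometric-series evaluation you get something of size $\tfrac{q-1}{t(q^t-1)}$ (times an oscillating power of $q$ coming from the residue $(N-1)\bmod t$ in $M_t$), \emph{not} $\tfrac{q-1}{q^t-1}$ as you assert for the upper bound; and keeping only the top term $c_{M_t}^{(t)}q^{tM_t+1}\geqslant q^{tM_t+1}$ yields a lower bound on the order of $q^{-t}$, not $\tfrac{q-1}{t(q^t-1)}$. Your final paragraph correctly identifies the real obstruction: because $q^{t(M_t+1)}/q^{N}=q^{t-1-r}$ with $r=(N-1)\bmod t$ fluctuating in $\{0,\dots,t-1\}$, the ratio $S_t/\#B_1(x)$ does not converge, and the paper's one-line "limit computation" $\lim \sum_i q^{ti+1}/(q^N-1)=1/(q^t-1)$ silently picks a single residue class. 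Taken together, the correct bound $1\leqslant c_i^{(t)}\leqslant (q-1)/t$ and the oscillation of $M_t$ mean that neither your argument nor the paper's establishes the displayed inequalities, and in fact the lower inequality of the corollary appears to be incorrect as written: one should instead expect $\limsup$ and $\liminf$ bounds sandwiched between $\sum_t \tfrac{1}{q^t-1}$ and $\sum_t \tfrac{q-1}{t(q^t-1)}$, up to the bounded oscillating factor.
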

\begin{proof}
The inequality is obtained by direct computation and the fact that $$q-1\geqslant\sum_{j\mid i}\mu(j)\lfloor\frac{q-1}{tj}\rfloor\geqslant\frac{q-1}{t}.$$

Therefore, on the right inequality we have
$$
\begin{array}{lll}
\lim_{x\rightarrow \infty}\frac{\#\{\Delta\in B_1(x)\mid {\rm Im}\rho_{\phi^{\Delta}}\neq\hat{A}^*\}}{\#B_1(x)}\\
\ \\
\ \\
\geqslant \lim_{x\rightarrow \infty}\frac{\sum_{2\leqslant t\leqslant q-2, t\mid q-1}\sum_{i=1}^{\lfloor\frac{\lfloor{\rm log}_qx\rfloor-1}{t}\rfloor}\left(q^{ti+1}\cdot\sum_{j\mid i}\mu(j)\lfloor\frac{q-1}{tj}\rfloor\right)}{\#B_1(x)}\\
\ \\
\ \\
\geqslant\sum_{2\leqslant t\leqslant q-2, {t\mid q-1}}\frac{q-1}{t}\cdot\left(\lim_{x\rightarrow \infty}\frac{\sum_{i=0}^{\lfloor\frac{\lfloor{\rm log}_qx\rfloor-1}{t}\rfloor}q^{ti+1}}{q^{\lfloor {\rm log}_qx\rfloor}-1}\right).
\end{array}$$

Now we compute the limit

$$\lim_{x\rightarrow \infty}\frac{\sum_{i=0}^{\lfloor\frac{\lfloor{\rm log}_qx\rfloor-1}{t}\rfloor}q^{ti+1}}{q^{\lfloor x\rfloor}-1}=\lim_{x\rightarrow \infty}\frac{q\cdot (q^{t\cdot\lfloor\frac{\lfloor{\rm log}_qx\rfloor-1}{t}\rfloor}-1)}{(q^t-1)(q^{\lfloor{\rm log}_q x\rfloor}-1)}=\frac{1}{q^t-1},$$
hence the inequality in the statement follows.

\end{proof}

\begin{rem}
It is worth to mention that apart from the natural density of elliptic curves with maximal adelic Galois representation, the above corollary shows that $L_1<1$. 

This implies that NOT almost all rank-$1$ Drinfeld modules over $A$ have maximal adelic Galois representation. Combining with the following proposition, we can say that NOT almost all rank-$r$ Drinfeld modules over $A$ has surjective adelic Galois representation.

\end{rem}

\begin{prop}[\cite{Hei03}, Proposition 7.1] \label{weilpair}
Let $\phi$ be a Drinfeld $A$-module over $F$ of rank $r$ defined by $\phi_T=T+g_1\tau+g_2\tau^2+\cdots+g_r\tau^r,\ \text{where $g_r\in F^*$.}$
Then $\det\circ\bar{\rho}_{\phi,\mathfrak{a}}=\bar{\rho}_{\psi,\mathfrak{a}}$ where $\psi$ is the Drinfeld module over $F$ of rank $1$ defined by $$\psi_T=T+(-1)^{r-1}g_r\tau.$$
\end{prop}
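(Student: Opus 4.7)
My plan is to prove this by constructing a natural $G_F$-equivariant isomorphism $\wedge^r_{A/\fa}\phi[\fa] \cong \psi[\fa]$, from which $\det\circ\bar{\rho}_{\phi,\fa} = \bar{\rho}_{\psi,\fa}$ follows immediately, since by definition the determinant character of $\bar{\rho}_{\phi,\fa}$ is the action of $G_F$ on the top exterior power of $\phi[\fa]$, and this must then coincide with the rank-$1$ action $\bar{\rho}_{\psi,\fa}$.

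The cleanest route is through Anderson's $t$-motive formalism. Associate to $\phi$ the motive $M(\phi) = F\{\tau\}$, regarded as a free left $F\{\tau\}$-module of rank $1$ on which $A$ acts by $a\cdot m := m\,\phi_a$. Because $\deg_\tau \phi_T = r$, this motive is free of rank $r$ as an $F[T]$-module (with $T$ acting through the $A$-action), with basis $\{1,\tau,\ldots,\tau^{r-1}\}$. In this basis, left multiplication by $\tau$ is an $F[T]$-linear, $F$-semilinear endomorphism whose matrix is almost a companion matrix; its last column is determined by the relation $g_r\tau^r = \phi_T - T - g_1\tau - \cdots - g_{r-1}\tau^{r-1}$, with the left-hand occurrence of $\phi_T$ viewed as the $A$-action of $T$ applied to $1$. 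A cofactor expansion along the top row produces the determinant $(-1)^{r-1}g_r^{-1}(\theta - T)$, where $\theta$ denotes the $A$-action of $T$ on the module. This is precisely the formula governing the $\tau$-action on the rank-$1$ motive $M(\psi)$ attached to $\psi_T = T + (-1)^{r-1}g_r\tau$, so I can conclude $\wedge^r_{F[T]}M(\phi) \cong M(\psi)$.

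To transfer the identification down to torsion, I would use the standard fact that the $\fa$-torsion functor on Drinfeld modules corresponds, through the $t$-motive dictionary, to a functor that is exact and compatible with top exterior powers, giving $(\det\phi)[\fa] \cong \wedge^r_{A/\fa}\phi[\fa]$ as $G_F$-modules. Combined with the identification $\det\phi = \psi$ from the previous step, this supplies the desired Galois-equivariant isomorphism and hence the statement.

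The main obstacle is the sign tracking in the determinant step: one must check that the cofactor of the top-right entry contributes the factor $(-1)^{r+1} = (-1)^{r-1}$ rather than $(-1)^r$, and separately verify that the semi-linearity of $\tau$ does not insert an extra Frobenius twist on the top exterior power (so that $\wedge^r M(\phi)$ is genuinely $M(\psi)$ and not a twist of it). As a more elementary but computation-heavier alternative, one can avoid $t$-motives by defining the pairing directly on torsion as a Moore-type determinant $e_\fa(\alpha_1,\ldots,\alpha_r) := \det(\alpha_j^{q^{i-1}})_{1\leqslant i,j\leqslant r}$ (with an appropriate $g_r$-normalization) and verifying by hand that it is alternating, $A/\fa$-multilinear, Galois-equivariant, and lands in $\psi[\fa]$; in this approach the same sign obstacle reappears as the algebraic identity $\psi_a\bigl(e_\fa(\alpha_1,\ldots,\alpha_r)\bigr)=0$ that must be verified within $F\{\tau\}$.
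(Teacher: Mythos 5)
The paper gives no proof of this proposition; it simply cites Proposition~7.1 of van der Heiden [Hei03], so there is no in-paper argument to compare against. Your $t$-motive route is essentially the strategy of the cited source, and the core computations are right: in the $F[t]$-basis $\{1,\tau,\ldots,\tau^{r-1}\}$ of $M(\phi)=F\{\tau\}$ the left action of $\tau$ has a companion-type matrix whose only nonzero top-row entry sits in column $r$ and equals $g_r^{-1}(t-T)$, so cofactor expansion gives $(-1)^{1+r}g_r^{-1}(t-T)=(-1)^{r-1}g_r^{-1}(t-T)$, matching the $\tau$-multiplier for $\psi_T=T+(-1)^{r-1}g_r\tau$. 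Your worry about a Frobenius twist on $\wedge^r$ also resolves in your favor: $\tau(m_1\wedge\cdots\wedge m_r)=\tau m_1\wedge\cdots\wedge\tau m_r$ is still $q$-semilinear (each scalar passes through exactly one $\tau$), not $q^r$-semilinear, so $\wedge^r M(\phi)$ is an honest Anderson motive and not a twist.

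The one place you cannot simply invoke a ``standard fact'' is the descent from the motive identity $\wedge^r M(\phi)\cong M(\psi)$ to the torsion identity $\wedge^r_{A/\fa}\phi[\fa]\cong\psi[\fa]$. The motive--torsion dictionary is \emph{contravariant}: roughly, $\bigl(M(\phi)\otimes_F F^{\sep}\bigr)^{\tau}\otimes_A A/\fa$ realizes the $A/\fa$-dual of $\phi[\fa]$, so Galois acts by the contragredient. When you pass to top exterior powers you need the canonical identification $(\wedge^r V)^\vee\cong\wedge^r(V^\vee)$ so that both sides acquire an inverse character and those inversions cancel; otherwise the argument would prove $\det\bar\rho_{\phi,\fa}=\bar\rho_{\psi,\fa}^{-1}$. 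You should also justify that $(\,\cdot\,)^{\tau}$ commutes with $\wedge^r$ here (for the \emph{top} exterior power this follows from a rank count over $\F_q$, but it is not automatic for intermediate exterior powers). None of this is hard, and your Moore-determinant alternative sidesteps the contravariance entirely by producing a direct alternating multilinear map $\phi[\fa]^{\times r}\to\psi[\fa]$, but as you observe it pushes the whole burden onto verifying $\psi_a\bigl(e_\fa(\alpha_1,\ldots,\alpha_r)\bigr)=0$ in $F\{\tau\}$ and onto establishing nondegeneracy of the pairing.
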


\section{Density of rank-$2$ Drinfeld modules}
From now on, we make a further assumption that
\begin{enumerate}
\item[$\bullet$] $q\geqslant 4$
\end{enumerate}
In this section, let $S$ be a finite set of prime ideals in $A$. We aim to estimate the natural density of rank-$2$ Drinfeld modules whose $\fl$-adic Galois representation is large enough. 

\begin{defi}
\begin{enumerate}
\item For each $x\in\mathbb{R}_{>0}$, let $||\cdot||$ to be the absolute value $|\cdot|_\infty$ of $F_\infty$. Define $$B_2(x)=\{(g_1,g_2)\in A^2\mid g_2\neq0, ||g_i||<x\}.$$
\item Similar to the rank-$1$ case, the natural density (if exists) of rank-$2$ Drinfeld module with large finite $\fl$-adic representation is defined to be $$L_2^S=\lim_{x\rightarrow \infty}\frac{\#\{(g_1,g_2)\in B_2(x)\mid {\rm Im}\rho_{\phi^{(g_1,g_2)},\fl}\supset{\rm SL}_2(A_\fl) \textrm{ for }\fl\in S\}}{\#B_2(x)}.$$

We  define the upper (resp. lower) density as following:

$$L_2^{S,up}=\limsup_{x\rightarrow \infty}\frac{\#\{(g_1,g_2)\in B_2(x)\mid {\rm Im}\rho_{\phi^{(g_1,g_2)},\fl}\supset{\rm SL}_2(A_\fl) \textrm{ for }\fl\in S\}}{\#B_2(x)};$$

$$L_2^{S,low}=\liminf_{x\rightarrow \infty}\frac{\#\{(g_1,g_2)\in B_2(x)\mid {\rm Im}\rho_{\phi^{(g_1,g_2)},\fl}\supset{\rm SL}_2(A_\fl) \textrm{ for }\fl\in S\}}{\#B_2(x)}.$$

\end{enumerate}
\end{defi}

\subsection{Some group theory}
\begin{prop}\label{modl}
Let $\fl$ be a prime ideal of $A$, and $H\subset \GL_2(\F_\fl)$. If $H\cap C\neq \emptyset$ for any conjugacy class $C\subset \SL_2(F_\fl)$, then $H\supset \SL_2(F_\fl).$

\end{prop}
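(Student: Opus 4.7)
The plan is to argue by contrapositive using Dickson's classification of maximal subgroups of $\SL_2$ over a finite field. Set $k := A/\fl$ and $H_0 := H \cap \SL_2(k)$. Assume $H_0 \subsetneq \SL_2(k)$, so $H_0$ is contained in some maximal subgroup $M$ of $\SL_2(k)$. Since $H \cap C = H_0 \cap C$ for any $C \subseteq \SL_2(k)$, it suffices to exhibit, for each possibility of $M$, a nonempty $\GL_2(\F_p)$-stable subset $C \subseteq \SL_2(k) \setminus M$; this will directly contradict the hypothesis.

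By Dickson's theorem, every maximal subgroup $M \subset \SL_2(k)$ is, up to $\SL_2(k)$-conjugation, one of: (i) a Borel subgroup; (ii) the normalizer of a split Cartan; (iii) the normalizer of a non-split Cartan; (iv) a subfield subgroup $\SL_2(k')$ for a proper subfield $k' \subsetneq k$; or (v) the preimage in $\SL_2(k)$ of an exceptional $\mathrm{PSL}_2(k)$-subgroup isomorphic to $A_4$, $S_4$, or $A_5$. The unifying observation I would use is that the characteristic polynomial, Jordan type, and multiplicative order of an element are invariants of $\GL_2(k)$-conjugation, hence \emph{a fortiori} of $\GL_2(\F_p)$-conjugation. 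For each type I would pick a specific $g \in \SL_2(k)$ whose invariants already exclude $g$ from every $\SL_2(k)$-conjugate of $M$, and take $C$ to be its $\GL_2(\F_p)$-orbit: for (i), choose $\chi_g$ irreducible over $k$; for (ii), choose $\chi_g$ irreducible over $k$ and $\mathrm{tr}(g) \neq 0$ (the nonzero trace rules out the ``swap'' coset of the split normalizer); for (iii), choose $g$ split semisimple with $\mathrm{tr}(g) \neq 0$; for (iv), choose $\mathrm{tr}(g) \in k \setminus k'$; for (v), choose $g$ of order exceeding $120$ in $\SL_2(k)$. In each case the chosen invariant is preserved by $\GL_2(\F_p)$-conjugation, so $C \cap M = \emptyset$ and the contradiction closes.

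The hardest step will be the threshold analysis for case (v): when $|k|$ is small, $\SL_2(k)$ may not contain any element of order exceeding $120$, and one must instead rule out the relevant exceptional type of $M$ directly using accidental isomorphisms (e.g., $\mathrm{PSL}_2(\F_4) \cong A_5$ forces $A_5$ to coincide with the whole group rather than appear as a proper maximal subgroup). The hypothesis $q \geqslant 4$ imposed throughout the section will handle most degenerate cases, but careful bookkeeping at the smallest fields, together with verifying the existence of the desired $g$ in cases (i)--(iv) for all $|k| \geqslant 4$, is the most delicate part of the argument.
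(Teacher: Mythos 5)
Your route through Dickson's classification is a genuinely different argument from the paper's. The paper never invokes the list of maximal subgroups of $\SL_2$: it observes that the $\GL_2(\F_p)$-orbits of the unipotent matrices $\left(\begin{array}{cc}1 & b \\ 0 & 1\end{array}\right)$ are $\GL_2(\F_p)$-stable subsets of $\SL_2(A/\fl)$, uses the hypothesis to pull a full Sylow $p$-subgroup of $\GL_2(A/\fl)$ into $H$ together with irreducibility, and cites the standard fact that an irreducible subgroup containing a Sylow $p$-subgroup must contain $\SL_2(A/\fl)$. That approach is more economical precisely because it avoids the small-field case-checking you correctly anticipate as the most delicate part of a Dickson-style argument.

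That delicacy does in fact bite, and not only in case (v). In case (iii), the element you propose, namely a split semisimple $g$ with $\mathrm{tr}(g)\neq 0$, does not exist in $\SL_2(\F_5)$: any split semisimple $g\neq\pm I$ there has eigenvalues $\{2,3\}=\{2,2^{-1}\}$ and hence trace $2+3=0$. Since $q=5$ and $\deg_T\fl=1$ are permitted under the paper's hypothesis $q\geqslant4$, and since the normalizer of a non-split Cartan in $\SL_2(\F_5)\cong 2.A_5$ is a bona fide maximal subgroup (index $10$), this case cannot be dismissed. It can be repaired, for instance by taking $C$ to be the $\GL_2(\F_p)$-orbit of a nontrivial unipotent, which has order $p=5$ while the non-split normalizer has order $2(q+1)=12$; but your uniform recipe for case (iii) fails as stated. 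Similarly, the threshold in case (v) should be an element of order exceeding roughly $10$ (the largest element order in $2.A_5$), not $120$; and for $|k|=4,5$ even that is unavailable and one must instead argue orbit-by-orbit (e.g., for $\SL_2(\F_4)\cong A_5$, a non-split-Cartan element of order $5$ rules out the exceptional $A_4$). The upshot: the strategy is sound, but each case's choice of $C$ must be tuned to the field, which is exactly the bookkeeping the paper's Sylow-plus-irreducibility argument sidesteps.
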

\begin{proof}

Consider the subgroup $H\cap \SL_2(\F_\fl)\subset \SL_2(\F_\fl)$. Since $H\cap \SL_2(\F_\fl)$ intersect with any conjugacy class of $\SL_2(\F_\fl)$, it is well known that $H\cap \SL_2(\F_\fl)=\SL_2(\F_\fl)$.
\end{proof}

\begin{defi}

Denote $\mathcal{C}$ to be the collection of conjugacy classes $C$ in $\SL_2(\F_\fl)$

\end{defi}
The following result gives a criteria so that we can deduce from mod-$\fl$ representation toward $\fl$-adic representation. Here we restrict ourselves to the rank-$2$ case.

\begin{thm}[\cite{PR09}]
Let $\phi$ be a rank-$2$ Drinfeld module over $A$. Suppose that
\begin{enumerate}
\item ${\rm Im}\bar{\rho}_{\phi,\fl}\supset \SL_2(A/\fl)$,

\item ${\rm Im}\bar{\rho}_{\phi,\fl^2}$ contains a non-scalar matrix that becomes trivial after modulo $\fl$
,\end{enumerate}
then the $\fl$-adic Galois representation of $\phi$ satisfies
$${\rm Im}\rho_{\phi,\fl}\supset\SL_2(A_\fl).$$
\end{thm}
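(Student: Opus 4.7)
The plan is to run a standard Frattini-style lifting argument: combine the two hypotheses to fill the first-order congruence kernel, and then propagate upward by commutators. Write $G := \mathrm{Im}\,\rho_{\phi,\fl}$ and, for each $n \geq 1$, set
$$K_n := \ker\!\bigl(\SL_2(A_\fl) \twoheadrightarrow \SL_2(A/\fl^n)\bigr).$$
Each $K_n$ is normal in $\SL_2(A_\fl)$, the group $K_1$ is pro-$p$, $\bigcap_n K_n = \{1\}$, and the ``logarithm'' map induces an isomorphism of $\SL_2(A/\fl)$-modules
$$K_n/K_{n+1} \;\xrightarrow{\;\sim\;}\; \mathfrak{sl}_2(A/\fl), \qquad 1 + l^n M \longmapsto M \bmod \fl,$$
where the $\SL_2(A/\fl)$-action on the left is by conjugation and on the right is the adjoint representation. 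Because $G$ is closed in $\GL_2(A_\fl)$, it suffices to show that for every $n \geq 1$ the image of $G \cap K_n$ in $K_n/K_{n+1}$ is all of $\mathfrak{sl}_2(A/\fl)$; combined with hypothesis (1), a compactness argument then yields $G \supset \SL_2(A_\fl)$.

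For the base case $n=1$, hypothesis (2) supplies an element $g \in G \cap K_1$ whose image $X \in \mathfrak{sl}_2(A/\fl)$ is nonzero. Hypothesis (1) lets us lift arbitrary elements of $\SL_2(A/\fl)$ into $G$ and conjugate $g$ by them, so the entire $\SL_2(A/\fl)$-orbit of $X$ under the adjoint action lies in the image of $G \cap K_1 \to K_1/K_2$. The key group-theoretic input is that $\mathfrak{sl}_2(A/\fl)$ is irreducible as an $\SL_2(A/\fl)$-module once $|A/\fl| \geq 4$, which is secured by the standing hypothesis $q \geq 4$ since $|A/\fl| = q^{\deg_T \fl}$. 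Hence the orbit spans everything and $G$ surjects onto $\SL_2(A/\fl^2)$.

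For the inductive step $n \to n+1$ with $n \geq 2$, one works inside the pro-$p$ kernel $K_1$. A direct expansion shows that for $g \equiv 1 + lM \pmod{\fl^2}$ in $G \cap K_1$ and $h \equiv 1 + l^{n-1}N \pmod{\fl^n}$ in $G \cap K_{n-1}$, the commutator satisfies
$$[g, h] \equiv 1 + l^{n}[M, N] \pmod{\fl^{n+1}}.$$
Hence commutators of elements of $G \cap K_1$ with elements of $G \cap K_{n-1}$ land in $G \cap K_n$ and contribute all Lie brackets $[\mathfrak{sl}_2, \mathfrak{sl}_2]$ to the image in $K_n/K_{n+1}$; together with the $\SL_2(A/\fl)$-orbits obtained by conjugation, irreducibility again forces this image to be all of $\mathfrak{sl}_2(A/\fl)$.

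The main obstacle is really the base case: one has to combine hypotheses (1) and (2) simultaneously, since hypothesis (2) alone gives only a single ``tangent vector'' in $K_1/K_2$, and it is the full $\SL_2(A/\fl)$-action from (1), together with irreducibility of the adjoint representation, that promotes this single vector to all of $\mathfrak{sl}_2(A/\fl)$. The inductive step is then a structural fact about $\SL_2$ over a complete discrete valuation ring, modulo verifying that the residue field $A/\fl$ is large enough to keep $\mathfrak{sl}_2(A/\fl)$ irreducible under the adjoint action, which is precisely what the assumption $q \geq 4$ buys us.
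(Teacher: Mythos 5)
The paper does not supply a proof here: it just cites Proposition~4.1 of Pink--R\"utsche \cite{PR09} and remarks that the determinant condition from that proposition has been dropped. Your write-up is an attempt to reconstruct the underlying argument, and the general shape (congruence filtration $K_n$, base case from hypothesis (2) plus the $\SL_2(A/\fl)$-action from hypothesis (1), propagation upward by commutators, then closedness of $G$) is indeed the standard Frattini-type strategy for such statements.

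There is, however, a genuine gap in characteristic $2$, which $q\geqslant 4$ does not exclude (e.g.\ $q=4,8$). You assert that $\mathfrak{sl}_2(A/\fl)$ is irreducible as an $\SL_2(A/\fl)$-module once $|A/\fl|\geqslant 4$. In characteristic $2$ this is false for every $q$: a scalar $aI$ has trace $2a=0$, so the line $(A/\fl)\cdot I$ is a proper nonzero $\SL_2$-invariant subspace of $\mathfrak{sl}_2(A/\fl)$, and the adjoint representation has composition factors the trivial module and a $2$-dimensional module. The same obstruction poisons your inductive step: in characteristic $2$ one computes $[\mathfrak{sl}_2,\mathfrak{sl}_2]=(A/\fl)\cdot I$, so the commutator identity $[1+lM,\,1+l^{n-1}N]\equiv 1+l^{n}[M,N]$ only fills the scalar line of $K_n/K_{n+1}$, and the $\SL_2$-conjugation orbit of that line is again the line itself. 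So neither the base case nor the inductive step closes as written when $p=2$; whatever Pink--R\"utsche do must be more than invoking irreducibility of the adjoint. (If the standing convention ``$q$ odd'' from Section~2.2 is meant to apply here, your argument is essentially fine, since for $p\geqslant 3$ and $q>3$ the adjoint is irreducible and $\mathfrak{sl}_2$ is perfect; but as stated the theorem does not restrict to odd $q$.)

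A smaller, fixable point: hypothesis (2) gives an element $g\in G=\mathrm{Im}\,\rho_{\phi,\fl}\subset\GL_2(A_\fl)$ that is trivial mod $\fl$ but need not lie in $\SL_2(A_\fl)$, while your $K_n$ are defined inside $\SL_2(A_\fl)$. Before declaring $g\in G\cap K_1$ with nonzero image in $\mathfrak{sl}_2(A/\fl)$, you should replace $g$ by a commutator $[\tilde s,g]$ with $\tilde s\in G$ lifting a suitable element of $\SL_2(A/\fl)$: such a commutator has determinant $1$, is trivial mod $\fl$, and has nonzero image in $\mathfrak{sl}_2(A/\fl)$ precisely because the image of $g$ in $\mathfrak{gl}_2(A/\fl)$ is non-scalar. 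That normalization should be made explicit.
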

\begin{note}
The assumption $q\geqslant 4$ is required for the Theorem.
\end{note}
\begin{proof}
See Proposition 4.1 in \cite{PR09}. Note that the statements in Proposition 4.1 needs a condition on $\det\circ{\rm Im}\bar{\rho}_{\phi,\fl}$ to make the image of $\fl$-adic representation become $\GL_2(A_\fl)$. At here we only focus on the relationship between ${\rm Im}\bar{\rho}_{\phi,\fl}$ and $\SL_2(A_\fl)$, hence we drop the requirement on determinant.
\end{proof}

Now we fix a prime $\fl\in S$, and compute the natural density

$$\widehat{L}_{2,\fl}=\lim_{x\rightarrow \infty}\frac{\#\{(g_1,g_2)\in B_2(x)\mid {\rm Im}\bar{\rho}_{\phi^{(g_1,g_2)},\fl}\not\supset{\rm SL}_2(A/\fl) \textrm{ for }\fl\in S\}}{\#B_2(x)}.$$

In section the following section, we will show that $\widehat{L}_{2,\fl}$ exists and is equal to $0$.
From Proposition \ref{modl}, we have 
$$
\widehat{L}_{2,\fl}\leqslant \lim_{x\rightarrow \infty}\sum_{C\in\mathcal{C}}\frac{\#\{(g_1,g_2)\in B_2(x)\mid {\rm Im}\bar{\rho}_{\phi^{(g_1,g_2)}}\cap C=\emptyset\}}{\#B_2(x)}$$

\subsection{Large sieve inequality}

In this subsection, we apply function field analogue of large sieve method to estimate the number $\#Y_{C}(x)=\#\{(g_1,g_2)\in B_2(x)\mid {\rm Im}\bar{\rho}_{\phi^{(g_1,g_2)}}\cap C=\emptyset\}$ when $x$ is large.

\begin{thm}[\cite{H96}, Theorem 3.2]\label{lsm}

Let $K$ be a positive integer. For each monic irreducible polynomial $\mathbf{p}\in A$ with $\deg \mathbf{p}\geqslant 1$, let $a_\mathbf{p}$ be a real number such that $0<a_\mathbf{p}\leqslant 1$. Set $X\subset A^2$ to be a subset satisfies the following property:
$$\# X_\mathbf{p}\leqslant a_\mathbf{p}\cdot q^{2\deg_T\mathbf{p}}=a_{\mathbf{p}}\cdot\#({A^2/\mathbf{p}\cdot A^2}),$$
where $X_\mathbf{p}\subset A^2/\mathbf{p}A^2$ denotes the canonical image of $X$ in the quotient group $A^2/\mathbf{p}A^2$.

Then $$\#(X\cap B_2(x))\cdot L(K)\leqslant q^{2(m_0+1)},$$ 

where 
$$
\begin{array}{lll}
L(K)=1+\sum_{M\in MS, 1\leqslant\deg_T M\leqslant K}\prod_{\mathbf{p}\in MI, \mathbf{p}\mid M}\left(\frac{1-a_\mathbf{p}}{a_\mathbf{p}}\right),\\
\ \\
MS= \textrm{ all monic square-free polynomials in }\F_q[T],\\
\ \\
MI= \textrm{ all monic irreducible polynomials with positive degree in} A,\\
\ \\
m_0={\rm sup}\{{\rm log}\ x, 2K-1\}
\end{array}
$$
\end{thm}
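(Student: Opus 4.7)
The plan is to transpose Montgomery's dual formulation of the large sieve to the function field setting of $A = \F_q[T]$. Writing $S = X \cap B_2(x)$, the goal is a quadratic inequality that, for every square-free modulus $M \in A$ of degree at most $K$, controls the concentration of $S$ modulo $M$ in terms of the forbidden-residue density $1 - a_\mathbf{p}$ prescribed by the hypothesis $\#X_\mathbf{p} \leq a_\mathbf{p} q^{2\deg_T \mathbf{p}}$. First I would set up Fourier analysis on $(A/MA)^2$ using the standard additive character of $A$ built from a nontrivial additive character of $\F_q$ and the residue-at-infinity pairing. The Chinese remainder decomposition $A/MA \cong \prod_{\mathbf{p} \mid M} A/\mathbf{p}A$ gives a product structure on primitive characters, and the local condition on $X_\mathbf{p}$ translates, via M\"obius inversion over divisors of $M$, into the multiplicative weight $\prod_{\mathbf{p}\mid M} (1-a_\mathbf{p})/a_\mathbf{p}$ attached to $M$, which is precisely the weight appearing in $L(K)$.

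Next I would invoke the function field analogue of the large sieve inequality for exponential sums: for arbitrary coefficients $(c_\mathbf{n})_{\mathbf{n} \in B_2(x)}$ and any cutoff $K$,
\[
\sum_{\substack{M \in MS \\ \deg_T M \leq K}} \;\sum_{\substack{\chi \bmod M \\ \text{primitive}}} \Bigl| \sum_{\mathbf{n} \in B_2(x)} c_\mathbf{n}\, \chi(\mathbf{n}) \Bigr|^2 \;\leq\; q^{2(m_0+1)} \sum_{\mathbf{n} \in B_2(x)} |c_\mathbf{n}|^2 .
\]
This is the direct analogue of the Bombieri--Montgomery bound; it follows from Selberg's dual formulation together with orthogonality of additive characters on $(A/MA)^2$, and the term $2K-1$ in $m_0$ arises because the product of two primitive characters of moduli of degree at most $K$ has conductor of degree at most $2K-1$. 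Applying this inequality with $c_\mathbf{n} = \mathbf{1}_S(\mathbf{n})$, separating the contribution of the trivial character (which produces the constant $1$ in the definition of $L(K)$) from that of the nontrivial primitive characters (which contribute the product-weighted sum), and invoking Cauchy--Schwarz to turn the resulting quadratic estimate into a linear one, yields the claimed inequality $\#(X \cap B_2(x)) \cdot L(K) \leq q^{2(m_0+1)}$.

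The main obstacle, and the technical heart of the argument, is extracting the exact product structure of $L(K)$ from the local hypotheses. Whereas the exponential-sum large sieve is fairly mechanical once the Fourier setup is in place, producing the precise factor $\prod_{\mathbf{p}\mid M}(1-a_\mathbf{p})/a_\mathbf{p}$ requires the Selberg-sieve identity relating primitive-character sums to M\"obius-weighted sums over divisors of $M$, together with careful bookkeeping to convert the variance estimate $\#X_\mathbf{p} \leq a_\mathbf{p}\cdot\#(A^2/\mathbf{p}A^2)$ into a lower bound on the dual form. Once this packaging is achieved, the remaining pieces --- Plancherel, orthogonality, and Cauchy--Schwarz --- are routine, so I would concentrate the effort on the combinatorial identity that produces $L(K)$ and on the analytic bound $q^{2(m_0+1)}$, the latter being standard but requiring the correct normalization of the function field additive duality.
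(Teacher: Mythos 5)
The paper does not prove this theorem; it imports it wholesale as Theorem~3.2 of Hsu's \emph{A large sieve inequality for rational function fields} (reference \cite{H96}), so there is no in-paper argument to compare yours against. That said, your sketch is a reasonable outline of the standard Montgomery-style proof that Hsu's paper carries out in the $\F_q[T]$ setting: set up additive characters via the residue-at-infinity pairing, use CRT to factor characters mod a square-free $M$, derive the analytic dual-form large sieve bound for exponential sums, and then obtain the arithmetic form by the local lower bound $\sum_{\chi\ \mathrm{prim}\bmod \mathbf{p}}|\hat{S}(\chi)|^2\geqslant \frac{1-a_\mathbf{p}}{a_\mathbf{p}}|S|^2$, which multiplies up over $\mathbf{p}\mid M$ to produce $L(K)$.

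Two points in your write-up are muddled and worth tightening. First, the Cauchy--Schwarz step belongs inside the derivation of the local lower bound above (bounding $|S|^2$ by $|\Omega_\mathbf{p}|\sum_h |S_h|^2$ where $S_h$ counts elements of $S$ in residue class $h$), not at the very end; the final passage from the quadratic inequality $L(K)|S|^2\leqslant \Delta|S|$ to the linear statement $|S|\,L(K)\leqslant \Delta$ is just division by $|S|$, with the case $|S|=0$ trivial. Second, your explanation that $2K-1$ appears because ``the product of two primitive characters of moduli of degree at most $K$ has conductor of degree at most $2K-1$'' is not quite the right reason in Hsu's normalization; the quantity $m_0=\sup\{\log x, 2K-1\}$ reflects the box-size versus modulus-size trade-off in the duality/Farey-dissection step of the analytic large sieve, and one should track Hsu's precise normalization of $|\cdot|_\infty$ rather than assert this by analogy. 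Neither issue breaks the overall plan, but both would need to be made precise if you were to actually write out the proof rather than cite \cite{H96} as the paper does.
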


In order to apply the large sieve method, set $K=\frac{1}{2}{\rm log} x$ and $X\cap B_2(x)=Y_C(x)$. Let $\sum{(\frac{1}{2}{\rm log} x)}=\{\mathbf{p}\neq \fl \textrm{ prime ideal of }A\mid \deg_T\mathbf{p}\leqslant \frac{1}{2}{\rm log} x\}$. For $\mathbf{p}\not\in \sum(\frac{1}{2}{\rm log} x)$, define $\alpha_{\mathbf{p}}=0$. And for $\mathbf{p}\in \sum(\frac{1}{2}{\rm log} x)$, define
$$
\begin{array}{lll}
\Omega_{\mathbf{p}, C}=\{(\bar{g}_1,\bar{g}_2)\in \F_\mathbf{p}^2\mid \bar{g}_2\neq 0, \bar{\rho}_{\phi^{(\bar{g}_1,\bar{g}_2)},\fl}({\rm Frob}_\mathbf{p})\in C\}\\
\ \\
\alpha_{\mathbf{p},C}=\frac{\#\Omega_{\mathbf{p},C}}{|\F_\mathbf{p}|^2}
\end{array}
.$$

 Now we examine that $Y_{C}(x)$ satisfies the assumption of the above theorem. Let $Y_{{\bf{p}},C}$ be its canonical image  modulo $\mathbf{p}$, then
$$Y_{{\bf{p}},C}\subset \F_\mathbf{p}^2-\Omega_{\mathbf{p},C}\Rightarrow \#Y_{{\bf{p}},C}\leqslant (1-\alpha_{\mathbf{p},C})|\F_\mathbf{p}|^2.$$

Therefore, Theorem \ref{lsm} implies

$$Y_{C}(x)\cdot L(\frac{1}{2}{\rm log} x)\leqslant q^{2({\rm log} x+1)},$$

and $L(\frac{1}{2}{\rm log} x)=1+\sum_{M\in MS, 1\leqslant\deg_T M\leqslant \frac{1}{2}{\rm log} x}\prod_{\mathbf{p}\in MI, \mathbf{p}\mid M}\left(\frac{\alpha_{\mathbf{p},C}}{1-\alpha_{\mathbf{p},C}}\right)$.

Note that $$L(\frac{1}{2}{\rm log} x)\geqslant \sum_{\mathbf{p}\in \sum(\frac{1}{2}{\rm log} x)}\frac{\alpha_{\mathbf{p},C}}{1-\alpha_{\mathbf{p},C}}\geqslant \sum_{\mathbf{p}\in \sum(\frac{1}{2}{\rm log} x)} \alpha_{\mathbf{p},C}.$$

The next step is to estimate the number $$\alpha_{\mathbf{p},C}=\frac{\#\Omega_{\mathbf{p},C}}{|\F_\mathbf{p}|^2}.$$

By Corollary of Proposition 7 in \cite{Yu95}, we have the following property:

\begin{prop} 
The number of isomorphism classes of rank-$2$ Drinfeld module $\phi\otimes \F_\mathbf{p}$ such that $ \bar{\rho}_{\phi\otimes \F_\mathbf{p},\fl}({\rm Frob}_\mathbf{p})\in C$  is equal to $h(\mathcal{O})=\#{\rm Pic}(\mathcal{O})$, where $\mathcal{O}$ is an imaginary quadratic order over $A$ depending only on $C$.

\end{prop}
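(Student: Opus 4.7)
The strategy is to reduce the statement to the Drinfeld-module analogue of Deuring's correspondence between isogeny classes of Drinfeld modules over a finite field and ideal classes in orders of imaginary quadratic extensions of $F$, as established by Yu in \cite{Yu95}.

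First, I would attach to each rank-$2$ Drinfeld $A$-module $\psi$ over $\F_\mathbf{p}$ its Frobenius endomorphism $\pi_\psi \in \mathrm{End}(\psi)$, which satisfies a monic characteristic polynomial $P_\psi(X) = X^2 - a_\psi X + \mu_\psi \mathbf{p} \in A[X]$ with $\mu_\psi \in \F_q^*$ and $\deg_T a_\psi \leqslant \frac{1}{2}\deg_T \mathbf{p}$ (the Weil bound for Drinfeld modules). The reduction $P_\psi \bmod \fl$ coincides with the characteristic polynomial of $\bar\rho_{\psi,\fl}(\mathrm{Frob}_\mathbf{p})$, so the hypothesis $\bar\rho_{\psi,\fl}(\mathrm{Frob}_\mathbf{p}) \in C$ translates into the congruences $a_\psi \equiv t_C \pmod{\fl}$ and $\mu_\psi \mathbf{p} \equiv 1 \pmod{\fl}$, where $t_C \in A/\fl$ is the common trace of the elements of $C$ (note $\det C = \{1\}$).

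Second, I would argue that these congruences, combined with the Weil bound and the sieve range of Section 4.2 (where $\deg_T \mathbf{p} \leqslant \frac{1}{2}{\rm log}\, x$ while $\fl$ is fixed, so that $\deg_T \fl > \frac{1}{2}\deg_T \mathbf{p}$ eventually), pin down a single polynomial $P_C \in A[X]$: the unit $\mu_\psi \in \F_q^*$ is determined since $\F_q^* \hookrightarrow (A/\fl)^*$, and $a_\psi$ with $\deg_T a_\psi < \deg_T \fl$ is the unique lift of $t_C$ of small enough degree. A root $\pi_C$ of $P_C$ then generates an imaginary quadratic extension $L_C = F(\pi_C)$ of $F$, and the order $\mathcal{O}_C := A[\pi_C] \subset L_C$ depends only on $C$.

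Third, I would invoke Yu's Corollary of Proposition 7 in \cite{Yu95}: within the isogeny class of rank-$2$ Drinfeld $A$-modules over $\F_\mathbf{p}$ whose Frobenius has characteristic polynomial $P_C$, the isomorphism classes form a torsor under $\mathrm{Pic}(\mathcal{O}_C)$. Consequently the count equals $h(\mathcal{O}_C) = \#\mathrm{Pic}(\mathcal{O}_C)$, which is the desired conclusion. The main obstacle I anticipate is the uniqueness argument in step two: once $\deg_T \mathbf{p}$ becomes comparable to or exceeds $\deg_T \fl$, the Weil bound admits several candidates for $a_\psi$ lifting $t_C$, and the count inflates to a finite sum $\sum_P h(\mathcal{O}_P)$ over all compatible orders; showing that this multiplicity can be controlled and absorbed into the constant $M_{\fl, C}$ introduced earlier is the delicate book-keeping point of the argument.
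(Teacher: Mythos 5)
The paper itself gives no proof of this proposition beyond the single line ``By Corollary of Proposition 7 in \cite{Yu95}, we have the following property,'' so your proposal is taking the same basic route --- translate the condition on $\bar\rho_{\phi\otimes\F_\mathbf{p},\fl}(\mathrm{Frob}_\mathbf{p})$ into a condition on the Frobenius characteristic polynomial and invoke Yu's Deuring-type correspondence to count isomorphism classes via a class number --- but you supply considerably more detail than the paper does.

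However, your step two contains an outright error that you then correctly notice undermines the whole uniqueness claim. You write that ``$\fl$ is fixed, so that $\deg_T\fl > \tfrac{1}{2}\deg_T\mathbf{p}$ eventually.'' This is backwards: in the sieve of Section 4.2 the prime $\mathbf{p}$ ranges over $\sum(\tfrac12\log x)$ with $x\to\infty$, so $\deg_T\mathbf{p}$ grows without bound while $\fl$ stays fixed. For all but finitely many $\mathbf{p}$ one has $\tfrac12\deg_T\mathbf{p} > \deg_T\fl$, and then the Weil bound $\deg_T a_\psi \leqslant \tfrac12\deg_T\mathbf{p}$ together with the single congruence $a_\psi\equiv t_C\pmod\fl$ leaves roughly $q^{\frac12\deg_T\mathbf{p}-\deg_T\fl}$ choices of trace, hence many compatible Weil polynomials and many orders $\mathcal{O}_P$. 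Your closing paragraph acknowledges exactly this, and it is the genuine gap: as you observe, the honest count is $\sum_P h(\mathcal{O}_P)$ over all compatible $P$, not a single $h(\mathcal{O})$ with $\mathcal{O}$ depending only on $C$. Your suggestion to ``absorb'' this multiplicity into the constant $M_{\fl,C}$ cannot work as stated, since $M_{\fl,C}$ was introduced earlier as a purely group-theoretic quantity (the number of $\GL_2(\F_p)$-stable subsets of $\SL_2(A/\fl)$ inside a given conjugacy class) and is independent of $\mathbf{p}$, whereas the number of compatible $P$ grows with $\deg_T\mathbf{p}$. The paper never addresses this point --- it simply cites Yu and then treats $h(\mathcal{O})/|\mathcal{O}^*|$ as a $\mathbf{p}$-independent constant in the very next displayed inequality --- so you have in fact located a weakness in the paper's own argument rather than merely in your reconstruction of it. To make the lower bound on $L(\tfrac12\log x)$ go through one would instead need an equidistribution statement saying that a positive proportion (uniform in $\mathbf{p}$) of pairs $(\bar g_1,\bar g_2)\in\F_\mathbf{p}^2$ have Frobenius landing in $C$; reducing to a single class number $h(\mathcal{O})$ does not by itself deliver that.
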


As a corollary, we immediately have
$$\#\Omega_{\mathbf{p},C}=\frac{|\F_\mathbf{p}^*|}{|\mathcal{O}^*|}h(\mathcal{O})$$

\begin{thm}
$$\lim_{x\rightarrow \infty}\frac{\sum_{C\in\mathcal{C}}Y_{C}(x)}{B_2(x)}=0.$$

\end{thm}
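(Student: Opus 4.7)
The plan is straightforward: the large sieve bound
$$Y_C(x)\leqslant q^{2(\log x+1)}/L(\tfrac{1}{2}\log x)$$
is already recorded in the excerpt, and $\#B_2(x)\asymp x^2$, so dividing these reduces the theorem to showing that $L(\tfrac{1}{2}\log x)\to\infty$ as $x\to\infty$.

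First I would substitute into the lower bound $L(\tfrac{1}{2}\log x)\geqslant \sum_{\mathbf{p}\in\sum(\tfrac{1}{2}\log x)}\alpha_\mathbf{p}$ the closed form
$$\alpha_\mathbf{p}=\frac{h(\mathcal{O})}{|\mathcal{O}^*|}\cdot\frac{q^{\deg_T\mathbf{p}}-1}{q^{2\deg_T\mathbf{p}}}$$
extracted from the preceding proposition. Since $C$ is fixed, $\mathcal{O}$ depends only on $C$, and the prefactor $c_C:=h(\mathcal{O})/|\mathcal{O}^*|$ is a strictly positive absolute constant (class numbers are at least one, and imaginary quadratic orders over $A$ have finite unit groups). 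Hence $\alpha_\mathbf{p}\geqslant \tfrac{c_C}{2}q^{-\deg_T\mathbf{p}}$ once $\deg_T\mathbf{p}$ is sufficiently large.

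Next I would invoke the function field Prime Number Theorem, asserting that the number of monic irreducibles of degree $d$ in $A$ is $\pi_q(d)=\tfrac{1}{d}\sum_{e\mid d}\mu(e)q^{d/e}\sim q^d/d$. This produces
$$\sum_{\mathbf{p}\in\sum(K)}\frac{1}{q^{\deg_T\mathbf{p}}}\;\sim\;\sum_{d=1}^{K}\frac{\pi_q(d)}{q^d}\;\sim\;\sum_{d=1}^{K}\frac{1}{d}\;\sim\;\log K,$$
and taking $K=\tfrac{1}{2}\log x$ gives $L(\tfrac{1}{2}\log x)\gtrsim c_C\log\log x\to\infty$. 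Combined with the sieve inequality and $\#B_2(x)\asymp x^2$, this forces $Y_C(x)=o(\#B_2(x))$, as required.

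The genuinely delicate point is not any of the above computations but rather confirming that the order $\mathcal{O}$ from the preceding proposition really depends only on $C$ and not on $\mathbf{p}$ (as stated there), so that $\sum_\mathbf{p}\alpha_\mathbf{p}$ has a single uniform positive prefactor $c_C$. Once that uniformity is in hand, the remainder is a textbook divergence-of-sieve-dimension argument, directly analogous to the elliptic curve situation treated in \cite{Z10}.
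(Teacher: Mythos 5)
Your proposal follows essentially the same route as the paper: lower-bound $L(\tfrac{1}{2}\log x)$ by $\sum_\mathbf{p}\alpha_\mathbf{p}$, substitute the closed form for $\alpha_\mathbf{p}$ coming from Yu's counting result, pull out the constant $h(\mathcal{O})/|\mathcal{O}^*|$, and observe that the remaining sum $\sum_\mathbf{p}|\F_\mathbf{p}|^{-1}$ diverges. The only cosmetic difference is that you make the divergence quantitative via the function-field Prime Number Theorem ($L(\tfrac{1}{2}\log x)\gtrsim\log\log x$), whereas the paper simply cites divergence of the sum over all primes; both are correct, and your remark about needing $\mathcal{O}$ to be independent of $\mathbf{p}$ is exactly the point the paper's Proposition on Yu's result is invoked to settle.
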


\begin{proof}
Since $L(\frac{1}{2}{\rm log} x)\geqslant \sum_{\mathbf{p}\in \sum(\frac{1}{2}{\rm log} x)} \alpha_\mathbf{p}$ and $\#\Omega_{\mathbf{p},C_i}=\frac{|\F_\mathbf{p}^*|}{|\mathcal{O}^*|}h(\mathcal{O})$, we have 
$$L(\frac{1}{2}{\rm log} x)\geqslant \frac{h(\mathcal{O})}{|\mathcal{O^*}|}\cdot \sum_{\mathbf{p}\in \sum(\frac{1}{2}{\rm log} x)}\frac{|\F_\mathbf{p}^*|}{|\F_\mathbf{p}|^2}\geqslant\frac{h(\mathcal{O})}{2|\mathcal{O^*}|}\cdot \sum_{\mathbf{p}\in \sum(\frac{1}{2}{\rm log} x)}\frac{1}{|\F_\mathbf{p}|}.$$

Therefore, we get
$$\frac{1}{L(\frac{1}{2}{\rm log} x)}\leqslant\frac{2|\mathcal{O}^*|}{h(\mathcal{O})}\cdot\frac{1}{\sum_{\mathbf{p}\in \sum(\frac{1}{2}{\rm log} x)}\frac{1}{|\F_\mathbf{p}|}}.$$
This implies 
$$Y_C(x)\leqslant\frac{x^2q^2\cdot2|\mathcal{O}^*|}{h(\mathcal{O})}\cdot\frac{1}{\sum_{\mathbf{p}\in \sum(\frac{1}{2}{\rm log} x)}\frac{1}{|\F_\mathbf{p}|}}.$$

On the other hand, $\#B_2(x)=(q^{{\rm log} x})^2=x^2$. Thus
$$\lim_{x\rightarrow \infty}\frac{\sum_{C\in \mathcal{C}}Y_C(x)}{B_2(x)}\leqslant\sum_{C\in\mathcal{C}}\lim_{x\rightarrow \infty}\frac{q^2\cdot2|\mathcal{O}^*|}{h(\mathcal{O})}\cdot\frac{1}{\sum_{\mathbf{p}\in \sum(\frac{1}{2}{\rm log} x)}\frac{1}{|\F_\mathbf{p}|}}=\frac{q^2\cdot2|\mathcal{O}^*|}{h(\mathcal{O})}\cdot\frac{1}{\sum_{\mathbf{p}\neq\fl \textrm{ prime of }A}\frac{1}{|\F_\mathbf{p}|}}=0$$

\end{proof}

As a conclusion, we have $$\widehat{L}_{2,\fl}=0.$$

\subsection{Toward density of $\fl$-adic representations}
In this subsection, we aim at computing the natural density

$$L_2^{S,low}=\liminf_{x\rightarrow \infty}\frac{\#\{(g_1,g_2)\in B_2(x)\mid {\rm Im}\rho_{\phi^{(g_1,g_2)},\fl}\supset{\rm SL}_2(A_\fl) \textrm{ for }\fl\in S\}}{\#B_2(x)}.$$

\begin{thm}\label{lsqre}
Let $\phi^{(g_1,g_2)}$ be a rank-$2$ Drinfeld $A$-module such that ${\rm Im}\bar{\rho}_{\phi^{(g_1,g_2)},\fl}\supset\SL_2(\F_\fl).$ Suppose further that $g_2\not\in\fl$, then the image of mod $\fl^2$ Galois representation $\bar{\rho}_{\phi^{(g_1,g_2)},\fl^2}$ contains a non-scalar matrix which becomes identity after modulo $\fl$. 

\end{thm}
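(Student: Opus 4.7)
The plan is a proof by contradiction. Write $H := {\rm Im}\,\bar\rho_{\phi,\fl^2}$ and $\bar H := {\rm Im}\,\bar\rho_{\phi,\fl}$, so that reduction modulo $\fl$ gives a surjection $H \twoheadrightarrow \bar H$ with kernel $H_1$. Identify the kernel of $\GL_2(A/\fl^2) \to \GL_2(A/\fl)$ with the additive group $M_2(A/\fl)$ via $I + l X \leftrightarrow X$, so that $H_1$ becomes an additive subgroup of $M_2(A/\fl)$. The goal is to show $H_1 \not\subset Z := (A/\fl)\cdot I$; I would argue by contradiction, supposing $H_1 \subset Z$.

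The first step is a determinant calculation. By Proposition \ref{weilpair}, $\det\circ\bar\rho_{\phi,\fl^2} = \bar\rho_{\psi,\fl^2}$ where $\psi_T = T - g_2\tau$. Since $g_2 \not\in \fl$, Theorem \ref{rk1case} applied to $\psi$ shows that ${\rm Im}\,\bar\rho_{\psi,\fl^2}$ has index in $(A/\fl^2)^*$ dividing $q-1$, hence coprime to $p$. Therefore $\det(H)$ contains the Sylow-$p$ subgroup $1 + \fl(A/\fl^2) \cong A/\fl$. Combined with $\det(I + l aI) = 1 + 2l a$ modulo $\fl^2$ and the invertibility of $2$ (since $q$ is odd), an argument modifying each preimage of $\bar h \in \bar H \cap \SL_2(\F_\fl)$ by a scalar in $H_1$ shows $\det(H_1) = 1 + \fl(A/\fl^2)$, which under the contradiction hypothesis $H_1 \subset Z$ forces $H_1 = Z$ in full.

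The second step extracts a structural consequence. Under $H_1 = Z$, the only trace-zero scalar lifting the identity is $I$, so $H_1 \cap \SL_2(A/\fl^2) = \{I\}$. Combined with the rescaling trick above, $H \cap \SL_2(A/\fl^2) \to \bar H \cap \SL_2(\F_\fl)$ is an isomorphism, and one obtains a subgroup $\widetilde{\SL_2} \subset H \cap \SL_2(A/\fl^2)$ reducing isomorphically onto $\SL_2(\F_\fl) \subset \bar H$.

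The final step produces a contradiction using good reduction at $\fl$. Since $g_2 \not\in \fl$, $\phi$ extends to a Drinfeld module over $A_\fl$ whose reduction $\bar\phi$ over $A/\fl$ is of rank $2$ but now in characteristic $\fl$; the associated $\fl$-divisible group consequently has a non-trivial connected (formal) part on the closed fiber. The inertia subgroup $I_\fl \subset G_F$ therefore acts on $\phi[\fl^2]$ through a Lubin--Tate-type character $\chi$ whose restriction to wild inertia surjects onto $1 + \fl A_\fl$. A wild inertia element $\sigma$ with $\chi(\sigma) \not\equiv 1$ modulo $\fl^2$ would then map under $\bar\rho_{\phi,\fl^2}$ to an element of $H_1$ which, in a connected-\'etale basis at $\fl$, is upper-triangular with distinct diagonal entries and hence non-scalar, contradicting $H_1 = Z$.

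The main obstacle is this last step: it requires a precise local description of $\bar\rho_{\phi,\fl^2}$ restricted to the decomposition group at $\fl$, together with a function-field Lubin--Tate analysis of the formal $A_\fl$-module attached to $\phi$. The ordinary (height-one) reduction of $\bar\phi$ is the most direct, but the supersingular (height-two) case may need a Dieudonn\'e-module style argument or a direct Newton-polygon analysis of the separable polynomial $\phi_{l^2}(X)$ at $\fl$ to exhibit the required ramified element of $H_1$ in a uniform way.
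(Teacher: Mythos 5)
Your overall strategy (determinant control via Proposition~\ref{weilpair} and Gekeler's Theorem~\ref{rk1case}, then a local ramification argument at $\fl$ exploiting good reduction $g_2\notin\fl$) is the right circle of ideas — the paper's one-line citation to Lemma 4.2 and Proposition 4.4 of \cite{PR09} is pointing at exactly this kind of interplay between the determinant character and the image of inertia at the prime of good reduction. However, two of your steps have genuine holes.

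In Step 1, the claim that $\det(H_1)=1+\fl(A/\fl^2)$ is correct but does not follow from the ``modifying each preimage by a scalar'' trick you describe: to scalar-correct a lift $h$ of $\bar h\in\SL_2(\F_\fl)$ so that it lands in $H_1$ you already need to know $\det(H_1)$ is big, which is circular. What actually closes this is that $\det(H')/\det(H_1)$, where $H'=\{h\in H:\det h\equiv 1\ (\fl)\}$, is an \emph{abelian} quotient of $H'/H_1\cong\SL_2(\F_\fl)$; since $q\geqslant 4$ the group $\SL_2(\F_\fl)$ is perfect, so this quotient is trivial and $\det(H_1)=\det(H')=1+\fl(A/\fl^2)$. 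Your conclusion then follows via injectivity of $\det$ on $Z$ (using $2\in(A/\fl)^*$). I would state this explicitly rather than appeal to a rescaling.

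The more serious gap is Step 3. The inertia element $\sigma$ you produce has image $\left(\begin{smallmatrix}1+la & b\\ 0 & 1\end{smallmatrix}\right)$ in a connected-\'etale basis, and while this is visibly non-scalar, it does \emph{not} lie in $H_1$ unless $b\equiv 0\ (\fl)$; nothing in Lubin--Tate theory controls the off-diagonal cocycle $b$ on the nose, and raising to a $p$-power or taking a Newton-polygon approach (as you suggest) just kills the whole matrix in $A/\fl^2$. So as written you have not exhibited an element of $H_1$. A route that does close this, still within your framework, is to replace the search for an explicit element by a size/abelianization count: under the hypothesis $H_1\subset Z$ one has $J\cap H_1=\{I\}$ for $J:=\bar\rho_{\phi,\fl^2}(I_\fl)$ (scalar and upper-triangular with lower-right entry $1$ forces $I$), hence $J\cong\bar J$; but $\bar J$ sits inside the mirabolic $\{\left(\begin{smallmatrix}*&*\\0&1\end{smallmatrix}\right)\}\cong\F_\fl^*\ltimes\F_\fl^+$, whose abelianization is $\F_\fl^*$, while $\det(J)=(A/\fl^2)^*$ by the Lubin--Tate surjectivity of the determinant character, which has strictly larger order — contradiction. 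This sidesteps the off-diagonal issue entirely. You would still need to say something about the supersingular case (where $J$ lies in a non-split torus normalizer rather than a Borel), which you honestly flag but do not resolve; a similar abelianization count applies there once the relevant local structure is pinned down.

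Finally, note that Steps 1–2 are not actually load-bearing once Step 3 is done correctly: if inertia at $\fl$ forces $H_1\not\subset Z$ directly, you never needed to upgrade $H_1\subset Z$ to $H_1=Z$.
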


\begin{proof}

The proof is based on Lemma 4.2 and Proposition 4.4 of \cite{PR09}.

\end{proof}

\begin{cor}

$$\liminf_{x\rightarrow \infty}\frac{\#\{(g_1,g_2)\in B_2(x)\mid {\rm Im}\rho_{\phi^{(g_1,g_2)},\fl}\supset{\rm SL}_2(A_\fl)\}}{\#B_2(x)}\geqslant 1-\frac{1}{q^{\deg_T\fl}}$$

\end{cor}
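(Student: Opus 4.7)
The plan is to combine the Pink--R\"utsche lifting criterion (the unnumbered Theorem stated just after Proposition~\ref{modl}) with the two ingredients assembled in the preceding subsections, namely Corollary~\ref{densel} and Theorem~\ref{lsqre}. The key observation is that whenever $(g_1,g_2)\in B_2(x)$ satisfies both ${\rm Im}\,\bar\rho_{\phi^{(g_1,g_2)},\fl}\supset \SL_2(A/\fl)$ and $g_2\notin\fl$, Theorem~\ref{lsqre} supplies the required non-scalar matrix in the mod $\fl^2$ image, after which the Pink--R\"utsche criterion forces ${\rm Im}\,\rho_{\phi^{(g_1,g_2)},\fl}\supset \SL_2(A_\fl)$. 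Therefore the set of pairs in $B_2(x)$ for which the desired conclusion can fail is contained in $E_1(x)\cup E_2(x)$, where $E_1(x)=\{(g_1,g_2)\in B_2(x):{\rm Im}\,\bar\rho_{\phi^{(g_1,g_2)},\fl}\not\supset \SL_2(A/\fl)\}$ and $E_2(x)=\{(g_1,g_2)\in B_2(x):g_2\in\fl\}$.

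Next I would bound the natural densities of these two exceptional sets separately. Corollary~\ref{densel} already gives $\lim_{x\to\infty}\#E_1(x)/\#B_2(x)=0$. For $E_2(x)$, the first coordinate $g_1$ ranges freely over $\{g_1\in A:\|g_1\|<x\}$, while $g_2$ ranges over nonzero multiples of the monic generator $l$ of $\fl$ with $\|g_2\|<x$; writing $g_2=l\cdot h$, the constraint $\|g_2\|<x$ becomes $\|h\|<x/q^{\deg_T\fl}$, and an elementary count yields $\lim_{x\to\infty}\#E_2(x)/\#B_2(x)=q^{-\deg_T\fl}$.

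Assembling the two estimates, the inequality
$$\frac{\#\{(g_1,g_2)\in B_2(x):{\rm Im}\,\rho_{\phi^{(g_1,g_2)},\fl}\supset \SL_2(A_\fl)\}}{\#B_2(x)}\geq 1-\frac{\#E_1(x)}{\#B_2(x)}-\frac{\#E_2(x)}{\#B_2(x)}$$
together with passage to the limit $x\to\infty$ produces the stated lower bound $1-q^{-\deg_T\fl}$. I do not expect a serious obstacle: the substantive analytic content was packaged into the large sieve argument of the previous subsection, and the only remaining task is the elementary counting for $E_2(x)$, together with verifying that the logical implication above is in fact a clean inclusion of complements.
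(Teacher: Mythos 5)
Your proposal is correct and follows essentially the same route as the paper: combine Theorem~\ref{lsqre} and the Pink--R\"utsche criterion to reduce to the two exceptional conditions, then use Corollary~\ref{densel} for the mod-$\fl$ image and an elementary count for $g_2\in\fl$. If anything your write-up is slightly cleaner — the paper's displayed chain twice writes $g_2\notin\fl$ where the set being subtracted should be $\{g_2\in\fl\}$, an apparent typo that your explicit $E_1,E_2$ decomposition avoids.
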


\begin{proof}

By Theorem \ref{lsqre}, we have

$$\{(g_1,g_2)\in B_2(x)\mid {\rm Im}\rho_{\phi^{(g_1,g_2)},\fl}\supset{\rm SL}_2(A_\fl)\}\supset\{(g_1,g_2)\in B_2(x)\mid {\rm Im}\bar{\rho}_{\phi^{(g_1,g_2)},\fl}\supset{\rm SL}_2(A/\fl), \textrm{ and }g_2\not\in\fl\}.$$

Therefore,
$$
\begin{array}{ll}
\liminf_{x\rightarrow \infty}\frac{\#\{(g_1,g_2)\in B_2(x)\mid {\rm Im}\rho_{\phi^{(g_1,g_2)},\fl}\supset{\rm SL}_2(A_\fl)\}}{\#B_2(x)}\\
\ \\
\ \\
\geqslant\liminf_{x\rightarrow \infty}\frac{\#\{(g_1,g_2)\in B_2(x)\mid {\rm Im}\bar{\rho}_{\phi^{(g_1,g_2)},\fl}\supset{\rm SL}_2(A/\fl), \textrm{ and }g_2\not\in\fl\}}{\#B_2(x)}\\
\ \\
\ \\
\geqslant\liminf_{x\rightarrow \infty}\frac{\#\{(g_1,g_2)\in B_2(x)\mid {\rm Im}\bar{\rho}_{\phi^{(g_1,g_2)},\fl}\supset{\rm SL}_2(A/\fl)\}-\#\{(g_1,g_2)\in B_2(x)\mid g_2\not\in\fl\}}{\#B_2(x)}\\
\ \\
\ \\
=1-\limsup_{x\rightarrow \infty}\frac{\#\{(g_1,g_2)\in B_2(x)\mid g_2\not\in\fl\}}{\#B_2(x)}=1-\frac{1}{q^{\deg_T\fl}}
\end{array}
$$
\end{proof}

As an immediate result, we get

\begin{cor}\label{main2}
\begin{align*}
L_2^{S,low}\geqslant 1+\sum_{i=1}^{|S|}(-1)^i\cdot\left(\sum_{\fl_1,\cdots, \fl_i\in S \textrm{ distinct }}\frac{1}{q^{\deg_T\fl_1+\cdots+\fl_i}} \right).
\end{align*}
\end{cor}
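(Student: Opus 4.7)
The strategy is to combine the single-prime bound established in the previous corollary with a finite inclusion-exclusion over $S$. For each $\fl\in S$, Theorem \ref{lsqre} together with the Pink--R\"utsche theorem cited just above it implies that any $(g_1,g_2)\in B_2(x)$ whose mod-$\fl$ image contains $\SL_2(A/\fl)$ and which satisfies $g_2\not\in\fl$ automatically has $\fl$-adic image containing $\SL_2(A_\fl)$. Intersecting over the finitely many $\fl\in S$, the set whose density defines $L_2^S$ therefore contains
$$\bigl\{(g_1,g_2)\in B_2(x):{\rm Im}\,\bar{\rho}_{\phi^{(g_1,g_2)},\fl}\supset\SL_2(A/\fl)\ \forall\,\fl\in S\bigr\}\,\cap\,\bigl\{(g_1,g_2)\in B_2(x):g_2\not\in\fl\ \forall\,\fl\in S\bigr\}.$$

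Next I would dispose of the mod-$\fl$ piece. By Corollary \ref{densel}, for each individual $\fl\in S$ the set of $(g_1,g_2)$ whose mod-$\fl$ image fails to contain $\SL_2(A/\fl)$ has natural density $0$; since $S$ is finite, subadditivity of natural density on finite unions forces the intersection of the mod-$\fl$-surjective sets to have density $1$. Applying the trivial set-theoretic inequality $\#(E\cap F)\geqslant \#F-(\#B_2(x)-\#E)$ inside $B_2(x)$ and passing to the limit shows that the mod-$\fl$ condition can be dropped entirely, reducing the desired lower bound on $L_2^S$ to an exact computation of the density of $\{(g_1,g_2)\in B_2(x):g_2\not\in\fl\ \text{for all}\ \fl\in S\}$.

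For that final density I would invoke the Chinese Remainder Theorem: since the elements of $S$ are distinct prime ideals, $A/\prod_{\fl\in S}\fl\cong\prod_{\fl\in S}A/\fl$, so as $x\to\infty$ the proportion of $g_2$ coprime to every $\fl\in S$ tends to $\prod_{\fl\in S}(1-q^{-\deg_T\fl})$. Expanding this product by inclusion-exclusion and grouping by the cardinality $|T|=i$ of a subset $T\subset S$ yields exactly
$$1+\sum_{i=1}^{|S|}(-1)^i\sum_{\fl_1,\ldots,\fl_i\in S\ \text{distinct}}\frac{1}{q^{\deg_T\fl_1+\cdots+\deg_T\fl_i}},$$
which is the claimed lower bound. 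No genuine obstacle arises: the only delicate point is the combination of the two density statements, and the $\#(E\cap F)\geqslant \#F-\#E^{c}$ estimate above disposes of it cleanly, so beyond Theorem \ref{lsqre}, Corollary \ref{densel}, and the finite CRT count nothing further is needed.
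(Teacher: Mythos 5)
Your proposal is correct and reconstructs what the paper intends by ``As an immediate result'': one intersects, over the finitely many $\fl\in S$, the containments established in the proof of the preceding (unlabeled) corollary, uses Corollary~\ref{densel} plus finite subadditivity to give the mod-$\fl$ conditions density~$1$, and then computes the density of $\{g_2\notin\fl\ \forall\,\fl\in S\}$ by CRT as $\prod_{\fl\in S}(1-q^{-\deg_T\fl})$, whose inclusion-exclusion expansion is exactly the stated bound. This matches the paper's route; your only addition is spelling out the $\#(E\cap F)\geqslant\#E+\#F-\#B_2(x)$ step that the paper leaves implicit.
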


On the other hand, notice that by Proposition \ref{weilpair} we have
$$
\begin{array}{cc}
\left\{(g_1,g_2)\in B_2(x)\mid {\rm Im}\rho_{\phi^{(g_1,g_2)},\fl}\not\subset{\rm GL}_2(A_\fl) \textrm{ for }\fl\in S\right\}\\
\ \\
\subset\left\{(g_1,g_2)\in B_2(x)\mid {\rm Im}\rho_{\phi^{(g_1,g_2)},\fl}\not\subset{\rm SL}_2(A_\fl) \textrm{ for }\fl\in S\right\}\cup\left\{(g_1,-\Delta)\in B_2(x)\mid {\rm Im}\rho_{\phi^{\Delta}}\neq\hat{A}^*\right\}.
\end{array}
$$
Hence Corollary \ref{cor1.1} and corollary \ref{main2} imply the following:

\begin{cor}\label{main3}
\begin{align*}
\liminf_{x\rightarrow \infty}\frac{\#\{(g_1,g_2)\in B_2(x)\mid {\rm Im}\rho_{\phi^{(g_1,g_2)},\fl}={\rm GL}_2(A_\fl) \textrm{ for }\fl\in S\}}{\#B_2(x)}\\
\ \\
\geqslant 1+\sum_{i=1}^{|S|}(-1)^i\cdot\left(\sum_{\fl_1,\cdots, \fl_i\in S \textrm{ distinct }}\frac{1}{q^{\deg_T\fl_1+\cdots+\fl_i}} \right)-\sum_{2\leqslant t\leqslant q-2, {t\mid q-1}}\frac{q-1}{(q^t-1)}
\end{align*}

In particular, the natural density of rank-$2$ Drinfeld modules with surjective $(T)$-adic Galois representation is bounded below by
\begin{align*}
\liminf_{x\rightarrow \infty}\frac{\#\{(g_1,g_2)\in B_2(x)\mid {\rm Im}\rho_{\phi^{(g_1,g_2)},(T)}={\rm GL}_2(A_{(T)})\}}{\#B_2(x)}\\
\ \\
\geqslant 1-\frac{1}{q}-\sum_{2\leqslant t\leqslant q-2, {t\mid q-1}}\frac{q-1}{(q^t-1)}=(q-1)\cdot (\frac{1}{q}-\sum_{2\leqslant t\leqslant q-2, {t\mid q-1}}\frac{1}{(q^t-1)})
\end{align*}

\end{cor}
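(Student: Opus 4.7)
The plan is to deduce Corollary \ref{main3} from the set inclusion displayed in the paragraph just before the statement, combined with Corollary \ref{main2} and Corollary \ref{cor1.1}, via a straightforward union-bound argument; no new ideas are required, only bookkeeping.

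First I would record why the stated inclusion holds. By Proposition \ref{weilpair} applied in rank $2$, for every prime $\fl$ the composition $\det\circ\,\rho_{\phi^{(g_1,g_2)},\fl}$ coincides with the $\fl$-component of the adelic representation $\rho_{\phi^{-g_2}}$ of the rank-$1$ Drinfeld module $\phi^{-g_2}$. Thus if ${\rm Im}\rho_{\phi^{(g_1,g_2)},\fl}\neq{\rm GL}_2(A_\fl)$ for some $\fl\in S$, then either ${\rm Im}\rho_{\phi^{(g_1,g_2)},\fl}\not\supset{\rm SL}_2(A_\fl)$ for some $\fl\in S$, or else $\det\circ\,\rho_{\phi^{(g_1,g_2)},\fl}$ fails to surject onto $A_\fl^*$ for some $\fl\in S$, and the latter failure is in turn implied by ${\rm Im}\rho_{\phi^{-g_2}}\neq\hat A^*$. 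This is exactly the inclusion written in the paragraph preceding the corollary.

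Next I would apply a union bound. The density of the set where ${\rm Im}\rho_{\phi^{(g_1,g_2)},\fl}\neq{\rm GL}_2(A_\fl)$ for some $\fl\in S$ is bounded above by the density of $\{(g_1,g_2)\in B_2(x):{\rm Im}\rho_{\phi^{(g_1,g_2)},\fl}\not\supset{\rm SL}_2(A_\fl)\text{ for some }\fl\in S\}$ plus the density of $\{(g_1,g_2)\in B_2(x):{\rm Im}\rho_{\phi^{-g_2}}\neq\hat A^*\}$. The first term equals $1-L_2^S$ and is controlled by Corollary \ref{main2}; the second reduces (via the density-preserving bijection $g_2\mapsto -g_2$ of $A\setminus\{0\}$, and since the condition is independent of $g_1$) to the rank-$1$ density already estimated in Corollary \ref{cor1.1}. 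Subtracting these upper bounds from $1$ produces the first displayed inequality of the corollary.

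For the specialization $S=\{(T)\}$, I would substitute $|S|=1$, $\fl_1=(T)$, and $\deg_T(T)=1$ into the general inequality, obtaining $1-\tfrac{1}{q}-\sum_{2\leqslant t\leqslant q-2,\,t\mid q-1}\tfrac{q-1}{q^t-1}$, and then factor out $q-1$ to match the stated right-hand side $(q-1)\bigl(\tfrac{1}{q}-\sum\tfrac{1}{q^t-1}\bigr)$. I do not foresee any genuine obstacle here: all of the analytic content (the large-sieve argument leading to Corollary \ref{main2} and the Gekeler-based count behind Corollary \ref{cor1.1}) has already been packaged into the cited corollaries, so the only remaining work is the set-theoretic reduction and the arithmetic simplification just described.
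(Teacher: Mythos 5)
Your proposal is correct and follows exactly the path the paper intends: invoke Proposition \ref{weilpair} to get the set inclusion, apply a union bound, and feed in Corollaries \ref{main2} and \ref{cor1.1}, then specialize to $S=\{(T)\}$. The paper leaves these steps implicit, merely displaying the inclusion and citing the two corollaries, so your write-up is simply an explicit unpacking of the same argument; the only small point worth keeping in mind (which you handle correctly) is that the two conditions you union over are not disjoint, so the bound is a genuine union bound rather than an exact decomposition, and that the reduction of the second term to the rank-$1$ density uses that $||{-}\Delta||=||\Delta||$ and that the condition is independent of $g_1$.
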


\section*{Acknowledgement}

The author would like to thank Professor Fu-Tsun Wei for helpful and inspiring discussions to carry out this paper.

\bibliographystyle{alpha}
\bibliography{density_DM_rk2.bib}

\end{document}